\newtheorem{X}{X}[section]
\newtheorem{corollary}[X]{Corollary}
\newtheorem{lemma}[X]{Lemma}
\newtheorem{proposition}[X]{Proposition}
\newtheorem{theorem}[X]{Theorem}
\renewcommand{\d}{{\rm d}}
\theoremstyle{definition}
\newtheorem{remark}[X]{Remark}
\newcommand{\Dstar}{\mathcal D_{k,N}^*(\phi;X)}
\newcommand{\eps}{\varepsilon}
\title[Extending the support in an Iwaniec--Luo--Sarnak family]{Extending the unconditional support in an Iwaniec--Luo--Sarnak family}
\author{Lucile Devin, Daniel Fiorilli and Anders S\"odergren}
\address{Univ. Littoral C\^ote d'Opale, UR 2597
	LMPA, Laboratoire de Math\'ematiques Pures et Appliqu\'ees \newline
	\rule[0ex]{0ex}{0ex}\hspace{8pt} Joseph Liouville,
	F-62100 Calais, France}
\email{lucile.devin@univ-littoral.fr}
\address{Univ. Paris-Saclay, CNRS, Laboratoire de mathématiques d'Orsay, 91405, Orsay, France} 
\email{daniel.fiorilli@universite-paris-saclay.fr}
\address{Department of Mathematical Sciences, Chalmers University of Technology and the University \newline
	\rule[0ex]{0ex}{0ex}\hspace{8pt} of Gothenburg, SE-412 96 Gothenburg, Sweden}
\email{andesod@chalmers.se} 
\subjclass[2010]{11F11, 11M26, 11M41 (primary), 11M50 (secondary)}
\date{\today}
\begin{document}

\maketitle

\begin{abstract}
We study the harmonically weighted one-level density of low-lying zeros of $L$-functions in the   family of holomorpic newforms of fixed even weight $k$ and prime level $N$ tending to infinity.
For this family, Iwaniec, Luo and Sarnak proved that the Katz--Sarnak prediction for the one-level density holds unconditionally when the support of the Fourier transform of the implied test function is contained in $(-\tfrac32,\tfrac32)$.
In this paper, we extend this admissible support to $(-\Theta_k,\Theta_k)$, where $\Theta_2 = 1.866\dots$ and $\Theta_k$ tends monotonically to $2$ as $k$ tends to infinity. This is asymptotically as good as the best known GRH result.  
The main novelty in our analysis is the use of zero-density estimates for Dirichlet $L$-functions.
\end{abstract}

\section{Introduction}

Since the seminal paper of Iwaniec, Luo and Sarnak~\cite{ILS} on low-lying zeros of families of $L$-functions attached to holomorhic modular forms, much work has been done towards the Katz--Sarnak heuristics~\cite{KS} (see for instance~\cite{SST} and the references therein). In the family of cusp forms of fixed weight $k$ and large level $N$, Iwaniec, Luo and Sarnak have proven that the Katz--Sarnak conjecture for the one-level density holds for test functions $\phi$ for which ${\rm supp}(\widehat \phi) \subset (-\frac 32,\frac 32)$ unconditionally, and ${\rm supp}(\widehat \phi) \subset (-2,2)$ under the relevant Riemann Hypothesis.  These results have been refined by Miller~\cite{M} and Miller--Montague~\cite{MM} to estimates containing lower-order terms, under the same assumptions.  In the current paper, we extend the admissible support of the test function for the harmonically weighted one-level density: we show unconditionally that the Katz--Sarnak conjecture holds
 whenever ${\rm supp}(\widehat \phi) \subset (-\Theta_k,\Theta_k)$, where $\Theta_k$ is a monotonically increasing function of $k$ such that $\Theta_2=1.866\dots$,  $\Theta_4=1.942\dots$, and $ 0<2- \Theta_k \ll \frac 1k $ for $k\geq 6$.

Before we state our main result, we need to introduce some notations. We fix  a basis $B_k^*(N)$ of Hecke eigenforms of the space $H_k^*(N)$ of newforms of prime level~$N$ and weight~$k$. We also renormalize so that for every $f(z)= \sum_{n=1}^{\infty}\lambda_f(n) n^{\frac{k-1}2} e^{2\pi i n z}\in B_k^*(N)$, we have $\lambda_f(1)=1$. We will use the harmonic weights defined as
$$\omega_f(N):= \frac{\Gamma(k-1)}{(4\pi)^{k-1} (f,f)_N}; \qquad (f,f)_N:= \int_{\Gamma_0 (N) \backslash \mathbb H} y^{k-2} |f(z)|^2 \d x\d y. $$
Note that (combine \cite[Lemma 2.5]{ILS} and \cite{HL,Iw})
 $ (kN)^{-1-\eps} \ll_{\eps} \omega_f(N) \ll_{\eps} (kN)^{-1+\eps}.$ 
Our main object of study is the one-level density, which is defined as
$$ \Dstar:= \frac{1}{\Omega_k(N)}\sum_{f \in B^*_k(N)} \omega_f(N) \sum_{\gamma_f} \phi\Big( \gamma_f \frac{\log X}{2\pi} \Big), $$
where $\rho_f=\frac 12+i\gamma_f$ runs through the non-trivial zeros of $L(s,f)$ (note that $\gamma_f$ might be non-real), $\phi$ is an even Schwartz function whose Fourier transform is compactly supported, the parameter $X=k^2N$ is the analytic conductor of $f$, and the total weight is given by
$$ \Omega_k(N) = \sum_{f \in B^*_k(N)} \omega_f(N).$$
We have the estimate 
$$\Omega_k(N) = 1+O_k(N^{-1}) $$
(see Lemma~\ref{lemma:peterssonbound}).
We can now state our main result.

\begin{theorem}
\label{theorem main}
Let $\phi$ be an even Schwartz function for which  ${\rm supp}(\widehat \phi) \subset (-\Theta_k,\Theta_k)$, where

\begin{equation}
 \Theta_k:= \begin{cases} 1 + \frac{\sqrt 3}{2} & \text{ if } k=2; \\ 
2\big( 1-\frac 1{10 k-5}\big) &\text{ if } k\geq 4.
\end{cases}
\label{equation definition thetak}
\end{equation}
Then, for $N$ running through the set of prime numbers, we have the estimate 
\begin{equation}
\mathcal D_{k,N}^*(\phi;X) =  \int_{\mathbb R} W(O)(x) \phi(x) \d x  +o_{N\rightarrow \infty}(1),
\label{equation main theorem}
\end{equation}
where $W(O)(x)=1+\frac 12\delta_0(x)$.
\end{theorem}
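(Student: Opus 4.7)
The strategy is that of Iwaniec--Luo--Sarnak, with the Riemann Hypothesis for Dirichlet $L$-functions replaced by zero-density estimates. Apply the explicit formula for $L(s,f)$ to each newform $f\in B_k^*(N)$; after using the prime number theorem to handle the constant part of $\Lambda_f(p^2)=(\lambda_f(p^2)-1)\log p$, together with routine archimedean computations, the individual one-level density takes the form
\begin{equation*}
D(f;\phi) = \widehat\phi(0) + \tfrac{1}{2}\phi(0) - P_1(f) - P_2(f) + o(1),
\end{equation*}
where
\begin{equation*}
P_\nu(f) := \frac{2}{\log X}\sum_p \frac{\lambda_f(p^\nu)\log p}{p^{\nu/2}}\widehat\phi\!\Big(\frac{\nu\log p}{\log X}\Big),\qquad \nu=1,2.
\end{equation*}
Since $\widehat\phi(0)+\tfrac{1}{2}\phi(0)=\int_{\R}W(O)(x)\phi(x)\d x$, Theorem~\ref{theorem main} reduces to showing that the harmonic average $\sum_f\omega_f(N)\,P_\nu(f) = o(1)$ for $\nu=1,2$ throughout the range $\mathrm{supp}(\widehat\phi)\subset(-\Theta_k,\Theta_k)$.

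For this, interchange the summations and apply Petersson's trace formula: $\sum_f\omega_f(N)\lambda_f(p^\nu) = \delta_{p^\nu=1} + \mathrm{Kl}_\nu(p)$, where $\mathrm{Kl}_\nu(p)$ is a weighted sum of $S(p^\nu,1;c)/c$ over $c\equiv 0\pmod N$, convolved with Bessel transforms $J_{k-1}(4\pi\sqrt{p^\nu}/c)$. For moduli $c\geq 2N$, the Weil bound together with the decay of $J_{k-1}$ yields admissible control uniformly in the target range $\mathrm{supp}(\widehat\phi)\subset(-2,2)$. The critical obstruction is the slice $c=N$ combined with $p^\nu$ of size close to $X^{\Theta_k}$. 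Expand $S(p^\nu,1;N)$ via Gauss sums as a linear combination of Dirichlet characters $\chi\bmod N$, thereby reducing matters to bounding, with weights of size $\sqrt N/\varphi(N)$, the character sums
\begin{equation*}
T_\nu(\chi):=\sum_p \chi(p^\nu)\frac{\log p}{p^{\nu/2}}\widehat\phi\!\Big(\frac{\nu\log p}{\log X}\Big).
\end{equation*}
Under GRH for $L(s,\chi)$ these exhibit square-root cancellation---giving the conditional ILS support up to $2$---whereas Weil/Burgess-type bounds only reach $3/2$.

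The novelty is to bypass GRH for individual $L(s,\chi)$. For each $\chi$, express $T_\nu(\chi)$ by Mellin inversion as a contour integral of $-(L'/L)(s,\chi^\nu)$ against the Mellin transform of the test function, and shift the contour strictly past the critical line. The residues at the nontrivial zeros $\rho=\beta+i\gamma$ of $L(s,\chi^\nu)$ contribute factors of size $\ll X^{\Theta_k(\beta-1/2)}(1+|\gamma|)^{-A}$. Summing over $\chi\bmod N$ and applying a hybrid zero-density estimate of the form $\sum_{\chi\bmod N}N(\sigma,T,\chi)\ll (NT)^{c(1-\sigma)+\eps}$ (Jutila--Huxley, with $c\leq 12/5$ in the relevant range) produces a total bound $\ll X^{-\delta}$ provided $\Theta_k$ is strictly below $2$. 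Additional savings come from the Bessel factor $J_{k-1}$: its small-argument bound $J_{k-1}(x)\ll(ex/(2k))^{k-1}$ and its transition-region estimate $J_{k-1}(x)\ll k^{-1/3}$ effectively shorten the critical prime range by a $k$-dependent factor, yielding the explicit values of $\Theta_k$ appearing in \eqref{equation definition thetak}.

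The main obstacle is the delicate balance between the character average, the zero-density exponent, and the $k$-dependent Bessel gain, which together must keep the final bound $o(1)$ throughout the full range $\mathrm{supp}(\widehat\phi)\subset(-\Theta_k,\Theta_k)$. The case $k=2$ is the hardest: $J_1$ offers no transition-region improvement, which forces the weaker threshold $\Theta_2=1+\tfrac{\sqrt{3}}{2}$. For $k\geq 4$ the extra Bessel decay pushes $\Theta_k$ monotonically towards $2$ at the rate $2-\Theta_k\ll 1/k$, in agreement with the best GRH-conditional support.
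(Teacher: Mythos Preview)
Your overall strategy---explicit formula, Petersson, Kloosterman sums rewritten via Dirichlet characters, Mellin inversion followed by a contour shift and zero-density estimates---is exactly the paper's. However, two specific claims in your sketch are wrong and, if taken literally, would break the argument.

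First, the assertion that moduli $c\geq 2N$ are disposed of by Weil plus Bessel decay ``uniformly in $(-2,2)$'' is false. A direct computation with $|S(p^\nu,1;c)|\ll c^{1/2+\eps}$ and $J_{k-1}(x)\ll\min(x^{k-1},x^{-1/2})$ shows that truncating the $c$-sum at $2N$ leaves an error of size $N^{\sigma k/2-k+1/2+\eps}$, which is only admissible for $\sigma<2-1/k$; this is strictly smaller than $\Theta_k$ for every even $k$ (and for $k=2$ it is only the ILS threshold $3/2$). The paper therefore keeps \emph{all} $c<C:=N^{1+1/(2k-3)}$ in the character treatment, decomposes each $\chi\bmod c$ into a primitive $\psi\bmod f$ times a character $\xi\bmod N$ (with $d=fN\mid c$), and applies the zero-density bound to the full average $\sum_{d\le C,\,N\mid d}\sum^*_{\chi\bmod d}$. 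It is only \emph{after} this step that one sees the worst case occurs at $d=N$; one cannot discard larger $c$ beforehand.

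Second, your explanation of where the explicit values of $\Theta_k$ come from is incorrect. The weight $k$ is fixed throughout, so the transition-region bound $J_{k-1}(x)\ll k^{-1/3}$ plays no role. The $k$-dependence enters solely through the small-argument bound $J_{k-1}(x)\ll x^{k-1}$, which in the Mellin transform $\Psi_{\phi,X,c,k}$ produces a factor $X^{\sigma(k-1)/2}/c^{k-1}$. After the contour shift, the contribution of zeros at height $\beta$ is governed by $N^{\sigma(\beta+\frac{k}{2}-1)-k}$ times the zero-density count. Applying the hybrid estimate with exponent $(1-\beta)\min\bigl(\tfrac{3}{2-\beta},\tfrac{3}{3\beta-1}\bigr)$ and optimizing over $\beta\in[\tfrac12,1)$ gives
\[
\Theta_k=\inf_{\frac12\le\beta<1}\frac{k-(1-\beta)\min\bigl(\tfrac{3}{2-\beta},\tfrac{3}{3\beta-1}\bigr)}{\beta+\tfrac{k}{2}-1}.
\]
For $k\ge 4$ the infimum is attained at $\beta=\tfrac34$ (where the two branches of the minimum meet, giving the Huxley exponent $\tfrac{12}{5}$), yielding $\Theta_k=2-\tfrac{2}{10k-5}$. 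For $k=2$ the infimum shifts to $\beta=\sqrt{3}-1$ on the $\tfrac{3}{2-\beta}$ branch, giving $\Theta_2=1+\tfrac{\sqrt3}{2}$. So the distinction between $k=2$ and $k\ge 4$ is purely a feature of this $\beta$-optimization, not of any special behaviour of $J_1$.
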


\begin{remark}
$ $
\begin{itemize}

\item
In the literature on low-lying zeros, there are already several unconditional  results with large admissible supports.
We mention the result of Drappeau--Pratt--Radziwi\l\l~\cite{DPR} in the family of Dirichlet $L$-functions and that of Fouvry--Kowalski--Michel~\cite{FKM} in the family of symmetric square $L$-functions of holomorphic cusp forms of prime level.

\item The use of zero-density estimates in order to circumvent the assumption of the Riemann Hypothesis in the context of low-lying zeros is an idea which was first hinted by Brumer~\cite{B}, and subsequently successfully used by Kowalski--Michel~\cite{KM} in families of holomorphic cusp forms and Baier--Zhao~\cite{BZ} in families of elliptic curve $L$-functions. The novelty in our approach is to use such estimates after an application of the Petersson formula, which relates low-lying zeros of cusp form $L$-functions to sums over zeros of Dirichlet $L$-functions, for which powerful zero-density estimates have been known for almost a century (see e.g.~\cite{L,Mo}).

\item Our techniques also yield unconditional extensions of the admissible support in families of Maass forms $L$-functions in the level aspect. These results will appear in a forthcoming paper.

\end{itemize}

\end{remark}

Here is a brief summary of the proof of Theorem~\ref{theorem main}. In Section~\ref{Section :: prerequisites}, we apply the explicit formula and express the one-level density $\Dstar$ as a sum over eigenvalues of Hecke operators at prime power values. Averaging over the family of newforms of prime level, we apply the Petersson formula and turn this last expression into a sum of Kloosterman sums weighted by Bessel functions. In Section~\ref{Section:Prime sum}, we rewrite the Kloosterman sums in terms of Dirichlet characters and Gauss sums using orthogonality. This last expression allows us to apply Mellin inversion and use a variant of the explicit formula for Dirichlet $L$-functions. Finally, in Section~\ref{Section:zero density} we complete the proof of Theorem~\ref{theorem main} using zero-density estimates. It is the shape of these zero-density estimates which gives the exact restriction on the support which appears in Theorem~\ref{theorem main}. Note that if one assumes the Grand Density Conjecture~\cite[p. 250]{IK}, then one can prove that the Katz--Sarnak prediciton holds in the full range $ {\rm supp}(\widehat \phi)\subset(-2,2)$, independently of $k$ (see Remark~\ref{remark grand}).

\section*{Acknowledgments}

The first author was supported by the grant KAW 2019.0517 from the Knut and Alice Wallenberg Foundation. The third author was supported by the grant 2021-04605 from the Swedish Research Council.
We thank the Anna-Greta and Holger Crafoord Fund and the Royal Swedish Academy of Sciences for supporting this project via the grant CRM2020-0008, as well as the IHP Research in Paris program for providing funding and excellent working conditions.

\section{Prerequisites and first estimates}
\label{Section :: prerequisites}

The goal of this section is to gather a few identities and estimates which will be central in our analysis, including the explicit and Petersson formulas. We will then bound some of the terms in these formulas and reduce our problem to estimates on averages of Kloosterman sums.

We begin with the explicit formula.
\begin{lemma}[Explicit Formula]
Let $\phi:\mathbb R \rightarrow \mathbb R$ be an even Schwartz  function whose Fourier transform has compact support. Then for $X > 1$ we have the formula

\begin{multline}
\Dstar = \widehat{\phi}(0) \frac{\log\frac N{\pi^2}}{\log X}+ \frac 1{\log X} \int_{\mathbb R} \Big(  \frac{\Gamma'}{\Gamma} \Big( \frac 14+ \frac{k+ 1}4 +\frac{\pi i t}{\log X} \Big)+ \frac{\Gamma'}{\Gamma} \Big( \frac 14+ \frac{k- 1}4 +\frac{\pi i t}{\log X} \Big)\Big) \phi(t) \d t\\
-\frac 2{\Omega_k(N)}\sum_{f\in B^*_k(N)} \omega_f(N) \sum_{p,\nu} \frac{\alpha_f^{\nu}(p)+\beta_f^{\nu}(p) }{p^{\nu/2}} \widehat \phi \Big( \frac{\nu \log p}{\log X} \Big) \frac{\log p}{\log X}. \label{Eq:explicit formula}
\end{multline}
Here, $\alpha_f(p)$ and $\beta_f(p)$ are the local coefficients of the $L$-function
$$L(s,f)= \sum_{n\geq 1} \frac{\lambda_f(n)}{n^s}=\prod_{p} \Big( 1-\frac {\alpha_f(p)}{p^s}\Big)^{-1}\Big( 1-\frac {\beta_f(p)}{p^s}\Big)^{-1}; $$
in particular, for $p\nmid N$ we have that $|\alpha_f(p)|=|\beta_f(p)|=1$, and for $p\mid N$ we use the convention that $\beta_f(p)=0$. 

\end{lemma}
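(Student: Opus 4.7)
The plan is to apply the standard explicit-formula machinery to each individual $L(s,f)$ and then to average over $f\in B_k^*(N)$ against the harmonic weights $\omega_f(N)/\Omega_k(N)$. For a newform $f$ of weight $k$ and prime level $N$, the completed $L$-function
\[
\Lambda(s,f):=\Bigl(\tfrac{\sqrt{N}}{2\pi}\Bigr)^{s}\Gamma\!\Bigl(s+\tfrac{k-1}{2}\Bigr)L(s,f)
\]
is entire and satisfies a functional equation $\Lambda(s,f)=\varepsilon_f\,\Lambda(1-s,f)$ with $\varepsilon_f=\pm 1$. Applying the Legendre duplication formula rewrites the archimedean factor as a constant times $(\sqrt{N}/\pi)^{s}\,\Gamma(s/2+(k-1)/4)\,\Gamma(s/2+(k+1)/4)$; taking logarithmic derivatives then yields
\[
\frac{\Lambda'}{\Lambda}(s,f)=\tfrac{1}{2}\log\tfrac{N}{\pi^{2}}+\tfrac{1}{2}\frac{\Gamma'}{\Gamma}\!\Bigl(\tfrac{s}{2}+\tfrac{k-1}{4}\Bigr)+\tfrac{1}{2}\frac{\Gamma'}{\Gamma}\!\Bigl(\tfrac{s}{2}+\tfrac{k+1}{4}\Bigr)+\frac{L'}{L}(s,f),
\]
which already matches the four pieces we need on the right-hand side of \eqref{Eq:explicit formula}.

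Next, set $g(s):=\phi\!\bigl(-i(s-\tfrac{1}{2})\tfrac{\log X}{2\pi}\bigr)$. Since $\phi$ is an even Schwartz function with compactly supported Fourier transform, Paley--Wiener ensures that $g$ is entire and of rapid decay on every vertical line, and the evenness of $\phi$ yields $g(1-s)=g(s)$. I would then evaluate
\[
\frac{1}{2\pi i}\Bigl(\int_{(c)}-\int_{(1-c)}\Bigr)\frac{\Lambda'}{\Lambda}(s,f)\,g(s)\,\d s
\]
for a fixed $c>1$. As $\Lambda(s,f)$ is entire and its zeros in the strip $1-c<\mathrm{Re}(s)<c$ are precisely the nontrivial zeros $\rho_f=\tfrac{1}{2}+i\gamma_f$ of $L(s,f)$, the residue theorem identifies this contour integral with $\sum_{\gamma_f}\phi\!\bigl(\gamma_f\tfrac{\log X}{2\pi}\bigr)$. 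Substituting $s\mapsto 1-s$ in the integral along $(1-c)$ and invoking $(\Lambda'/\Lambda)(1-s,f)=-(\Lambda'/\Lambda)(s,f)$ together with $g(1-s)=g(s)$ folds the two lines onto each other, so that
\[
\sum_{\gamma_f}\phi\!\Bigl(\gamma_f\tfrac{\log X}{2\pi}\Bigr)=\frac{1}{\pi i}\int_{(c)}\frac{\Lambda'}{\Lambda}(s,f)\,g(s)\,\d s.
\]

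I would then evaluate this integral term-by-term via the four-piece decomposition above. For the constant $\tfrac{1}{2}\log(N/\pi^{2})$ and the two $\Gamma'/\Gamma$ contributions, the line of integration can be shifted from $(c)$ down to $(1/2)$ with no poles encountered for $k\ge 2$ (the gamma arguments keep positive real part throughout the strip); parametrising $s=\tfrac{1}{2}+2\pi i t/\log X$ and using $\widehat{\phi}(0)=\int_{\mathbb R}\phi$ produces the first line of \eqref{Eq:explicit formula}. For the $L'/L$ piece, I would expand the absolutely convergent Dirichlet series $-(L'/L)(s,f)=\sum_{p,\nu\ge 1}(\alpha_f^{\nu}(p)+\beta_f^{\nu}(p))\,p^{-\nu s}\log p$ on the line $(c)$, with convergence ensured by Deligne's bound $|\alpha_f(p)|,|\beta_f(p)|\le 1$ combined with the rapid decay of $g$, and interchange sum and integral by Fubini. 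Each resulting integral $\tfrac{1}{\pi i}\int_{(c)}p^{-\nu s}g(s)\,\d s$ has an entire integrand, so shifting to the line $(1/2)$ and performing one change of variable evaluates it to $\tfrac{2}{\log X}\,p^{-\nu/2}\,\widehat{\phi}(\nu\log p/\log X)$ by Fourier inversion, yielding precisely the prime-power sum on the third line of \eqref{Eq:explicit formula}.

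Multiplying the resulting pointwise identity by $\omega_f(N)/\Omega_k(N)$ and summing over $f\in B_k^*(N)$ gives the claimed formula for $\Dstar$. No conceptual obstacle is expected, since the argument is a textbook application of the explicit formula for $\mathrm{GL}_2$ $L$-functions; the only points meriting care are (i) aligning the archimedean factors with the paper's preferred two-gamma form via the duplication formula, (ii) justifying the interchange of sum and integral for the prime-power series, and (iii) tracking signs and factors of $2$ through the functional-equation step and the Fourier-inversion computation so that the coefficient $-2/\Omega_k(N)$ appears correctly in front of the prime sum.
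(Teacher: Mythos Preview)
Your proposal is correct and follows the same approach as the paper: establish the explicit formula \eqref{Eq:single explicit formula} for a single $f\in B_k^*(N)$ and then average against $\omega_f(N)/\Omega_k(N)$. The only difference is that the paper simply quotes the single-form identity from \cite[(4.11)]{ILS}, whereas you supply a self-contained derivation via the standard contour-shifting argument; the details you give (duplication formula for the archimedean factor, Paley--Wiener for $g$, folding the two vertical lines using the functional equation, and the Fourier-inversion computation of the prime-power terms) are all accurate.
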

\begin{proof}
For $f \in B_k^*(N)$, the formula\footnote{We fixed a minor typo in the arguments of the gamma factors.}  \cite[(4.11)]{ILS} reads
\begin{multline}\label{Eq:single explicit formula}
\sum_{\gamma_f} \phi\Big( \gamma_f \frac{\log X}{2\pi} \Big) = \widehat{\phi}(0) \frac{\log\frac N{\pi^2}}{\log X}+\frac 1{\log X} \int_{\mathbb R} \Big( \frac{\Gamma'}{\Gamma} \Big( \frac 14+ \frac{k+ 1}4 +\frac{\pi i t}{\log X} \Big)+ \frac{\Gamma'}{\Gamma} \Big( \frac 14+ \frac{k- 1}4 +\frac{\pi i t}{\log X} \Big)\Big) \phi(t) \d t\\
-2 \sum_{p,\nu} \frac{\alpha_f^{\nu}(p)+\beta_f^{\nu}(p) }{p^{\nu/2}} \widehat \phi \Big( \frac{\nu \log p}{\log X} \Big) \frac{\log p}{\log X}.
\end{multline}
Next, we sum against the weight $\omega_f(N)$ to obtain the desired formula. 
\end{proof}

We now introduce the Petersson formula, which involves the Kloosterman sum
$$ S(m,n;c) := \underset{x \bmod c}{{\sum}^*} e\Big( \frac{mx+n\overline{x}}{c}\Big)  $$
and the Bessel functions $J_k$.
\begin{lemma}
	\label{lemma:besselbounds}
	Let $k\in \mathbb N$.	We have the bound
	$$ J_{k-1}(x)\ll \min\Big( \frac 1{(k-1)!} \Big(\frac{x}{2}\Big)^{k-1}, x^{-\frac 14} (|x-k+1|+k^{\frac 13})^{-\frac 14}\Big).  $$
\end{lemma}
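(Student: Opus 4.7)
The plan is to prove the two estimates inside the minimum separately; their combination is immediate.

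For the first estimate $|J_{k-1}(x)|\ll \tfrac{1}{(k-1)!}(x/2)^{k-1}$, I would invoke Poisson's integral representation
$$ J_{k-1}(x) = \frac{(x/2)^{k-1}}{\Gamma(k-\tfrac12)\sqrt{\pi}} \int_{-1}^{1} (1-t^2)^{k-3/2}\cos(xt)\,\d t, $$
valid for $k\geq 1$. Taking absolute values trivially inside the integral, the resulting Beta integral $\int_{-1}^1(1-t^2)^{k-3/2}\d t = B(\tfrac12,k-\tfrac12)=\Gamma(\tfrac12)\Gamma(k-\tfrac12)/\Gamma(k)$ cancels the prefactor exactly, leaving $(x/2)^{k-1}/(k-1)!$ with implied constant $1$. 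Note that this bound is only informative when $x$ is comparable to $k$ or smaller; in the range where the right-hand side is large, the second bound will dominate the minimum.

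For the uniform bound $|J_{k-1}(x)|\ll x^{-1/4}(|x-k+1|+k^{1/3})^{-1/4}$, I would split $x>0$ into three regions dictated by the position of $x$ relative to the turning point at $x = k-1$:
\begin{itemize}
\item The oscillatory regime $x \geq 2(k-1)$, where a standard stationary phase argument applied to the integral representation $J_{k-1}(x)=\frac{1}{\pi}\int_0^{\pi}\cos(x\sin\theta-(k-1)\theta)\,\d\theta$ yields the classical asymptotic $J_{k-1}(x)\ll x^{-1/2}$; since $|x-k+1|\asymp x$ here, this is exactly the claimed bound.
\item The small regime $0<x\leq (k-1)/2$, where the first estimate already gives a much stronger exponential decay, and hence the weaker polynomial bound trivially follows.
\item The transition regime $(k-1)/2<x<2(k-1)$, where the two stationary points of the above phase coalesce at $x = k-1$, and I would appeal to the uniform asymptotic expansion of Langer–Olver in terms of the Airy function, which gives $J_{k-1}(x)\ll k^{-1/3}\big(1+k^{-2/3}|x-k+1|\big)^{-1/4}$. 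A short computation rearranges this into the claimed uniform bound.
\end{itemize}

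The main obstacle is the transition regime $x\approx k-1$: direct stationary phase fails because of the coalescing critical points, so one needs the Airy-type uniform expansion to extract the sharp $k^{-1/3}$ size. Since this Bessel bound is a well-known analytic-number-theoretic workhorse appearing already in the Iwaniec–Luo–Sarnak treatment of Petersson sums (and ultimately traceable to Olver's asymptotics), I would prefer to cite the corresponding statement from \cite{ILS} or a standard reference rather than reproduce the derivation in detail.
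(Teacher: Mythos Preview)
Your proposal is correct and ultimately coincides with the paper's approach: the paper's proof is a bare citation of \cite[(2.11)', (2.11)'']{ILS} (with attribution to bounds in \cite{Wa,K} and a pointer to \cite[Lemma~3.2]{DFS}), exactly as you suggest doing at the end. Your sketch of the Poisson integral for the first bound and the oscillatory/exponential/Airy trichotomy for the second is accurate and goes beyond what the paper records, but there is no divergence in substance.
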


\begin{proof}
	This is~\cite[(2.11)', (2.11)'']{ILS}, which follows immediately from bounds in \cite{Wa,K}, see \cite[Lemma 3.2]{DFS}.
\end{proof}

\begin{lemma}[Petersson Formula]
\label{Petersson N Prime}
	Assuming that $N$ is prime, $(m,N)=1$, $(n,N^2) \mid N$, and $k$ is an even integer, we have the formula
	\begin{align}
	\nonumber &\sum_{f\in B_k^*(N)}  \omega_f(N) \lambda_f(m)\lambda_f(n) \\
	&= \delta(m,n) + 2\pi i^k    \sum_{c \equiv 0 \bmod N} c^{-1} S(m,n;c) J_{k-1}(4\pi \sqrt{mn}/c)  + O_{\eps}\Big(  (N\nu((n,N)))^{-1}(kmn)^{\eps} \Big), 
\end{align}
where 
$ \nu(t):=t\prod_{p\mid t} (1+\frac 1p)$.
\end{lemma}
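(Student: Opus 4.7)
The plan is to reduce this new-form Petersson formula to the classical Petersson trace formula on the full space $S_k(\Gamma_0(N))$ and control the difference coming from oldforms. Applied to any orthonormal basis $\mathcal{B}_k(N)$ of $S_k(\Gamma_0(N))$, the standard Petersson formula gives exactly the identity
$$\sum_{f\in \mathcal{B}_k(N)} \omega_f(N)\lambda_f(m)\lambda_f(n) = \delta(m,n) + 2\pi i^k \sum_{c\equiv 0 \bmod N} c^{-1} S(m,n;c) J_{k-1}(4\pi\sqrt{mn}/c),$$
so our task is to show that replacing $\mathcal{B}_k(N)$ by $B_k^*(N)$ changes the left-hand side by at most $O_\eps((N\nu((n,N)))^{-1}(kmn)^\eps)$.

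First I would invoke Atkin--Lehner theory. Since $N$ is prime, $S_k(\Gamma_0(N))$ decomposes as the span of $B_k^*(N)$ together with the oldform subspace, which is two copies of $S_k(\mathrm{SL}_2(\ZZ))$ generated by $g(z)$ and $g(Nz)$ as $g$ ranges over $B_k^*(1)$. The plan is then to choose an orthonormal basis of the oldform subspace by Gram--Schmidting these pairs for each $g$, write the resulting basis vectors $\tilde g, \tilde g'$ explicitly (the inner products $(g,g)_N$ and $(g(\cdot), g(N\cdot))_N$ are computable in terms of $(g,g)_1$ and of Hecke data of $g$ at the prime $N$, and this is classical), and compute the Fourier coefficients of $\tilde g, \tilde g'$ at $m$ and $n$ in terms of the Hecke eigenvalues $\lambda_g$.

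This is where the hypotheses play their role. Because $(m,N)=1$, the coefficient at $m$ of $\tilde g$ or $\tilde g'$ is a scalar multiple of $\lambda_g(m)$. Because $(n,N^2)\mid N$, the coefficient at $n$ involves only $\lambda_g(n/(n,N))$ and the eigenvalue $\lambda_g(N)$, with no contribution from denominators of size $N^2$; the factor $\nu((n,N))$ in the error is exactly the combinatorial factor that emerges from this calculation. Summing the oldform contribution over $g\in B_k^*(1)$, applying Deligne's bound $|\lambda_g(m)|\leq d(m)$ and $|\lambda_g(n/(n,N))|\leq d(n)$, using $|B_k^*(1)|\ll k$, and the estimate $\omega_g(N)\ll_\eps (kN)^{-1+\eps}\nu((n,N))^{-1}$ (the latter factor coming from the Gram--Schmidt denominators at level $N$) produces the stated bound.

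The main obstacle is the explicit bookkeeping in the oldform step: orthonormalizing $\{g(z), g(Nz)\}$ with respect to the Petersson inner product at level $N$, tracking how the Petersson weight $\omega_{\tilde g}(N)$ differs from $\omega_g(1)$, and then seeing how the eigenvalues $\lambda_g(N)$ conspire with the hypothesis $(n,N^2)\mid N$ to produce the arithmetic factor $\nu((n,N))^{-1}$. Everything else — the shape of the main term and the Kloosterman sum — comes for free from the classical Petersson formula, which itself is a standard computation via the Fourier expansion of Poincaré series and the Bruhat decomposition at the cusp $\infty$.
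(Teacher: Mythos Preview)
Your approach is correct in spirit but takes a different route from the paper. The paper does not carry out the Gram--Schmidt orthonormalisation by hand; instead it invokes the newform Petersson formula of Iwaniec--Luo--Sarnak \cite[Proposition~2.8]{ILS}, which for general squarefree level expresses the newform sum $\Delta^*_{k,N}(m,n)$ as a M\"obius-type alternating sum $\sum_{LM=N}\frac{\mu(L)}{L\,\nu((n,L))}\sum_{\ell\mid L^\infty}\ell^{-1}\Delta_{k,M}(\ell^2 m,n)$ over full-space Petersson sums. For $N$ prime this has exactly two terms: the term $L=1$ is the classical Petersson formula at level $N$ (yielding $\delta(m,n)$ and the Kloosterman sum), and the term $L=N$ is $-\frac{1}{N\nu((n,N))}\sum_{\ell\mid N^\infty}\ell^{-1}\Delta_{k,1}(\ell^2 m,n)$, which is bounded by Deligne's estimate, $|B_k(1)|\ll k$, and $\omega_f(1)\ll_\eps k^{-1+\eps}$. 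Your direct oldform subtraction is essentially what underlies the proof of \cite[Proposition~2.8]{ILS}, so you are redoing that argument in the special case $N$ prime rather than quoting it. The advantage of the paper's route is that the arithmetic factor $\nu((n,N))^{-1}$ is already built into the ILS formula and needs no separate tracking.

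One imprecision worth flagging: you write that $\omega_g(N)\ll_\eps (kN)^{-1+\eps}\nu((n,N))^{-1}$ with the last factor ``coming from the Gram--Schmidt denominators''. The Petersson weight of an orthonormalised oldform depends only on $g$ and $N$, not on $n$, so the saving $\nu((n,N))^{-1}$ cannot live there. In a direct calculation it emerges instead from the interaction of the Fourier coefficients of $g(z)$ and $g(Nz)$ at $n$ under the hypothesis $(n,N^2)\mid N$ (and the fact that the $n$-th coefficient of $g(Nz)$ vanishes unless $N\mid n$). This does not invalidate your plan, but the bookkeeping of where that factor appears needs to be corrected.
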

\begin{proof}
	We combine \cite[Proposition 2.8]{ILS} with \cite[Proposition 2.1]{ILS}, in the case where $N$ is prime. In their notation, thanks to \cite[Lemma 2.5]{ILS}, we have that
	\begin{align}
		&	  \sum_{f\in B_k^*(N)}  \omega_f(N) \lambda_f(m)\lambda_f(n)= \frac{12}{(k-1)N} \Delta^*_{k,N}(m,n) \notag \\
		& = \sum_{LM=N} \frac{\mu(L)}{L\nu((n,L))}\sum_{\ell \mid L^{\infty}} \ell^{-1} \Delta_{k,M}(\ell^2 m,n) \notag \\
		&= \delta(m,n) + 2\pi i^k    \sum_{c \equiv 0 \bmod N} c^{-1} S(m,n;c) J_{k-1}(4\pi \sqrt{mn}/c)  -   \frac{1}{N\nu((n,N))}\sum_{\ell \mid N^{\infty}} \ell^{-1} \Delta_{k,1}(\ell^2m,n).\label{equation lemma ie}
	\end{align}
The first equality above is a restatement of the definition of $\Delta^*_{k,N}(m,n)$  \cite[(2.53), (2.54)]{ILS}, and moreover we recall (see~\cite[(2.7)]{ILS}) that $\Delta_{k,N}(m,n) = \sum_{f\in B_k(N)}  \omega_f(N) \lambda_f(m)\lambda_f(n)$ denotes the sum over all forms in an orthogonal basis $B_k(N)$ of the space of cusp forms of level $N$ and weight $k$ (including the old-forms).
To estimate the contribution of the last term on the right-hand side of~\eqref{equation lemma ie}, we use Deligne's bound $\lambda_f(n) \ll_\eps n^{\eps}$, which combined with the bounds $|B_k(1)| \ll k$ and $\omega_f(N) \ll_{\eps} (kN)^{-1+\eps}$ yields the bound $\Delta_{k,1}(\ell^2m,n) =O_{\eps}((\ell mnk)^{\eps})$, resulting in the claimed estimate.
\end{proof}

We continue by estimating the sum over $c$ in Lemma~\ref{Petersson N Prime}.

\begin{lemma}[Estimated Petersson Formula]
	\label{lemma:peterssonbound}
	Let $k$ be a fixed even integer.
	If $N$ is prime, $N^2\nmid n$ and $(m,N)=1$,
	we have
	$$  \sum_{f\in B_k^*(N)}  \omega_f(N) \lambda_f(m)\lambda_f(n)=\delta(m,n)+ O_{k,\eps}\Big(   (n,N)^{-\frac 12}  N^{-1 + \eps}(mn)^{\frac14 +\eps} \Big). $$
	If in addition $mn \leq N^2$, then we have the bound
	$$  \sum_{f\in B_k^*(N)}  \omega_f(N) \lambda_f(m)\lambda_f(n)=\delta(m,n)+ O_{k,\eps}\Big((N(n,N))^{-1}(mn)^{\eps} +  (n,N)^{-\frac 12 }  N^{-k+\frac12 +\eps} (mn)^{\frac{k-1}{2}+\eps} \Big). $$
\end{lemma}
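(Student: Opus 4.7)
The plan is to start from Lemma~\ref{Petersson N Prime} and estimate the Kloosterman/Bessel sum
\[ A := 2\pi i^k \sum_{c \equiv 0 \bmod N} c^{-1} S(m,n;c) J_{k-1}\!\left(\frac{4\pi\sqrt{mn}}{c}\right) \]
using Weil's bound for Kloosterman sums together with the Bessel estimates of Lemma~\ref{lemma:besselbounds}. The error term $O_\eps((N\nu((n,N)))^{-1}(kmn)^\eps)$ already supplied by Lemma~\ref{Petersson N Prime} directly yields the $(N(n,N))^{-1}(mn)^\eps$ piece of the second bound (via $\nu(t) \geq t$), and is absorbed by the other error term in the first bound.

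First I would write $c = N\ell$ and, for $(\ell, N) = 1$, invoke multiplicativity of Kloosterman sums:
\[ S(m,n;N\ell) = S(\overline{N}m,\overline{N}n;\ell)\,S(\overline{\ell}m,\overline{\ell}n;N). \]
Since $(m,N) = 1$, Weil bounds the second factor by $2\sqrt{N}$ in general, but when $N \mid n$ it collapses to the Ramanujan sum $c_N(\overline{\ell}m) = -1$, producing a saving of $\sqrt{N}$. This is the mechanism delivering the $(n,N)^{-1/2}$ factor. Combined with Weil's bound on the first factor, this yields a pointwise estimate of the form
\[ |S(m,n;N\ell)| \ll_\eps (m,n,\ell)^{1/2}\,\ell^{1/2+\eps}\,N^{1/2}\,(n,N)^{-1/2}, \]
after handling the $N \mid \ell$ contribution separately via prime-power Kloosterman evaluations (where the hypothesis $N^2 \nmid n$ intervenes to keep these pieces of the expected size).

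For the first bound, I split the $\ell$-sum at $\ell_0 \asymp \sqrt{mn}/N$: when $\ell \leq \ell_0$ I use the large-argument Bessel bound $J_{k-1}(x) \ll_k x^{-1/2}$, and when $\ell > \ell_0$ the small-argument bound $J_{k-1}(x) \ll_k x^{k-1}$ from Lemma~\ref{lemma:besselbounds}. The standard divisor-style estimate $\sum_{\ell \leq L} (m,n,\ell)^{1/2} \ll L (mn)^\eps$ then shows that each piece contributes $O_{k,\eps}\bigl((n,N)^{-1/2} N^{-1+\eps}(mn)^{1/4+\eps}\bigr)$, giving the first claimed bound. For the second bound, the hypothesis $mn \leq N^2$ forces $4\pi\sqrt{mn}/c \leq 4\pi$ for every $c \geq N$, so the large-argument regime is empty; using only $J_{k-1}(x) \ll_k x^{k-1}$ and summing directly produces the sharper contribution $O_{k,\eps}\bigl((n,N)^{-1/2} N^{-k+1/2+\eps}(mn)^{(k-1)/2+\eps}\bigr)$, which, combined with the error inherited from Lemma~\ref{Petersson N Prime}, completes the second bound.

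The main technical obstacle I anticipate is the careful extraction of the $(n,N)^{-1/2}$ factor in the $N \mid \ell$ range, where the multiplicativity identity above fails and one must instead appeal to explicit Sali\'e-type evaluations of $S(m,n;N^{a+1})$. The hypothesis $N^2 \nmid n$ is used precisely here to ensure that these prime-power pieces retain the desired square-root saving; everything else is routine bookkeeping of divisor sums and Bessel tails.
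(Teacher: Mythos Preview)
Your proposal is correct and follows essentially the same route as the paper's proof: start from Lemma~\ref{Petersson N Prime}, bound the Kloosterman/Bessel sum via a gcd-refined Weil bound and the two Bessel regimes of Lemma~\ref{lemma:besselbounds}, splitting the $c$-sum at $c\asymp\sqrt{mn}$ in the general case and using only the small-argument bound when $mn\le N^2$.

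The only difference is packaging. The paper cites the refined Weil bound~\cite[(2.13)]{ILS} in the form
\[
|S(m,n;c)|\ll_\eps c^{1/2+\eps}\,\frac{(m,n,c)}{(m,c)^{1/2}+(n,c)^{1/2}},
\]
from which the factor $(n,N)^{-1/2}$ drops out immediately (since $(m,N)=1$ forces $(m,n,c)$ coprime to $N$, while $(n,c)\ge(n,N)$). You instead reproduce this saving by hand via twisted multiplicativity and the Ramanujan-sum collapse at the prime $N$; this is the same mechanism, just unpacked. One small correction: the hypothesis $N^2\nmid n$ is not really needed for the $N\mid\ell$ range---those terms are already a factor $N$ thinner and are acceptable with only the standard Weil bound---but rather to invoke Lemma~\ref{Petersson N Prime}, which requires $(n,N^2)\mid N$. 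No Sali\'e-type evaluation is necessary.
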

\begin{proof}
By Lemma~\ref{Petersson N Prime}, as $N$ is prime, we have
	\begin{multline*}
		  \sum_{f\in B_k^*(N)}  \omega_f(N) \lambda_f(m)\lambda_f(n)= \delta(m,n) + 2\pi i^k    \sum_{c \equiv 0 \bmod N} c^{-1} S(m,n;c) J_{k-1}(4\pi \sqrt{mn}/c)  \\+ O_{k,\eps}\Big(  (N(n,N))^{-1}(mn)^{\eps} \Big). 
\end{multline*}
Using Weil's bound in the form~\cite[(2.13)]{ILS}, together with Lemma~\ref{lemma:besselbounds}, we have
\begin{align*}
	\sum_{c \equiv 0 \bmod N} c^{-1} S(m,n;c) J_{k-1}(4\pi \sqrt{mn}/c)
&\ll_{\eps,k} \sum_{c \equiv 0 \bmod N} c^{-\frac12 +\eps} \Big(\frac{\sqrt{mn}}{c}\Big)^{k-1} \frac{(m,n,c)}{(m,c)^\frac 12+(n,c)^\frac 12}  \\ 
&\ll  \frac{(m,n,N)}{ (m,N)^{\frac 12}+(n,N)^{\frac 12} }  N^{-k+\frac 12+\eps}  (mn)^{\frac{k-1}{2}}  \sum_{b \geq 1}  (m,n,b)^{\frac 12} b^{\frac 12-k+\eps} \\
& \ll_\eps  \frac{(m,n,N)}{ (m,N)^{\frac 12}+(n,N)^{\frac 12} }  N^{-k+\frac12 +\eps} (mn)^{\frac{k-1}{2}+\eps}, 
\end{align*}
which concludes the proof in the case $mn\leq N^2$.
In case $N^2<mn$, we cut the sum over $c$ and apply Weil's bound:
\begin{align*}
&	\sum_{c \equiv 0 \bmod N} c^{-1} S(m,n;c) J_{k-1}(4\pi \sqrt{mn}/c) \\
	&\ll_{\eps,k} \sum_{\substack{c \equiv 0 \bmod N \\ c> \sqrt{mn}}} c^{-\frac12 +\eps} \Big(\frac{\sqrt{mn}}{c}\Big)^{k-1} \frac{(m,n,c)}{(m,c)^\frac 12+(n,c)^\frac 12}  
	+ \sum_{\substack{c \equiv 0 \bmod N \\ c< \sqrt{mn}}} c^{-\frac12 +\eps} \Big(\frac{\sqrt{mn}}{c}\Big)^{-\frac12} \frac{(m,n,c)}{(m,c)^\frac 12+(n,c)^\frac 12}  \\
	&\ll \frac{(m,n,N)}{ (m,N)^{\frac 12}+(n,N)^{\frac 12} } N^{-1+\eps}(mn)^{\frac14 + \eps},
\end{align*}
which gives the desired bound.
\end{proof}

We now apply the Hecke relations and discard certain prime powers from the expression \eqref{Eq:explicit formula}.

\begin{lemma}\label{Lemma:prime powers}
Let $\phi$ be an even Schwartz test function for which $\sigma= \sup({\rm supp} (\widehat \phi)) <\infty$. For $N$ running through the set of prime numbers, we have that
\begin{align}
\Dstar = \widehat{\phi}(0) \frac{\log\frac N{\pi^2}}{\log X}+ \frac 1{\log X} \int_{\mathbb R} \Big(  \frac{\Gamma'}{\Gamma} \Big( \frac 14+ \frac{k+ 1}4 +\frac{\pi i t}{\log X} \Big)+ \frac{\Gamma'}{\Gamma} \Big( \frac 14+ \frac{k- 1}4 +\frac{\pi i t}{\log X} \Big)\Big) \phi(t) \d t \notag\\
+2\sum_{p \nmid N} \frac 1p \widehat \phi \Big( \frac{2 \log p}{\log X} \Big) \frac{\log p}{\log X}-\frac 2{\Omega_k(N)}\sum_{f\in B^*_k(N)} \omega_f(N) \sum_{\substack{p \nmid N \\ \nu \geq 1}} \frac{\lambda_f(p^\nu)}{p^{\nu/2}} \widehat \phi \Big( \frac{\nu\log p}{\log X} \Big) \frac{\log p}{\log X} + O_{k,\eps}(N^{-1+\eps}).
\label{Eq:explicit formula without prime powers}
\end{align}
\end{lemma}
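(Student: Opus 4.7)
The plan is to start from the explicit formula \eqref{Eq:explicit formula} and to repackage the sum over prime powers via the Hecke recursion, isolating the only deterministic piece that survives on average. First I would split the prime sum according to whether $p=N$ (the unique ramified prime, since $N$ is prime) or $p\neq N$. At $p=N$ one has $\beta_f(N)=0$ and, for newforms of prime level, $|\alpha_f(N)|=N^{-1/2}$, so this contribution is bounded trivially by $\sum_{\nu\geq 1}N^{-\nu}\log N/\log X=O(N^{-1+\eps})$, which is absorbed into the stated error.

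For $p\neq N$ I would use $\alpha_f(p)\beta_f(p)=1$ and $\alpha_f(p)+\beta_f(p)=\lambda_f(p)$ together with the Hecke recursion $\lambda_f(p)\lambda_f(p^{\nu-1})=\lambda_f(p^\nu)+\lambda_f(p^{\nu-2})$ (with the convention $\lambda_f(p^{-1})=0$) to derive the identity $\alpha_f^\nu(p)+\beta_f^\nu(p)=\lambda_f(p^\nu)-\lambda_f(p^{\nu-2})$ for every $\nu\geq 1$. Substituted into the Euler factor sum in \eqref{Eq:explicit formula}, the $\lambda_f(p^\nu)$-part reproduces verbatim the last double sum of \eqref{Eq:explicit formula without prime powers}. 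In the $\lambda_f(p^{\nu-2})$-part, the change of variables $\mu=\nu-2$ isolates the $\mu=0$ contribution, where $\lambda_f(1)=1$ and $\Omega_k(N)^{-1}\sum_f\omega_f(N)=1$ produce exactly the deterministic sum $2\sum_{p\neq N}p^{-1}\widehat\phi(2\log p/\log X)\log p/\log X$ appearing in the statement.

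It therefore remains to show that the $\mu\geq 1$ remainder is $O_{k,\eps}(N^{-1+\eps})$. I would apply Lemma~\ref{lemma:peterssonbound} to $\sum_f\omega_f(N)\lambda_f(p^\mu)$ with $m=p^\mu$ and $n=1$; since $\mu\geq 1$, the diagonal $\delta(p^\mu,1)$ vanishes. The support condition on $\widehat\phi$ enforces $p^{\mu+2}\leq X^\sigma$, so $mn\leq N^2$ for $N$ large and the second bound of Lemma~\ref{lemma:peterssonbound} applies, yielding contributions of size $N^{-1+\eps}p^{\mu\eps}$ and $N^{-k+\frac12+\eps}p^{\mu(k-1)/2+\eps}$. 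Weighting by $p^{-\mu/2-1}\log p/\log X$ and summing over $p$ and $\mu\geq 1$, the first piece is $\ll N^{-1+\eps}$ directly, while the second, after using $p^{\mu(k-2)/2}\leq X^{\sigma(k-2)/2}$ from the support restriction, is bounded by $N^{-k+1/2+\sigma(k-2)/2+\eps}$, which is $o(N^{-1})$ for any fixed $\sigma<2$ and any even $k\geq 2$.

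The main (modest) obstacle will be verifying the Hecke identity at the boundary $\nu=1$, which requires the convention $\lambda_f(p^{-1})=0$ and some care when re-indexing so that no $\mu<0$ term is left behind, together with confirming that the second Petersson error term genuinely saves a positive power of $N$ uniformly in $(p,\mu)$ once the support width $\sigma<\Theta_k<2$ is taken into account. Beyond these checks, the argument is essentially bookkeeping, the sole analytic input being Lemma~\ref{lemma:peterssonbound}.
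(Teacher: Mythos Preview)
Your approach coincides with the paper's: separate $p=N$, apply the Hecke identity $\alpha_f^\nu(p)+\beta_f^\nu(p)=\lambda_f(p^\nu)-\lambda_f(p^{\nu-2})$, isolate the deterministic $\nu=2$ contribution, and bound the remaining $\nu\geq 3$ terms via Lemma~\ref{lemma:peterssonbound}. The only difference is which estimate from that lemma is invoked for the tail: the paper applies the \emph{first} bound $\ll_{k,\eps} N^{-1+\eps}(p^{\nu-2})^{1/4+\eps}$, which after weighting by $p^{-\nu/2}$ produces a convergent geometric series in $\nu\geq 3$ and gives $O_{k,\eps}(N^{-1+\eps})$ with no hypothesis on $\sigma$ beyond $\sigma<\infty$, exactly as the lemma is stated. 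Your use of the second bound is also correct, but it requires $p^\mu\leq N^2$ and hence $\sigma<2$; this proves a formally weaker statement than the one in the lemma, though it is of course sufficient for the application to Theorem~\ref{theorem main}.
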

\begin{proof}
	Note that if $p = N$ then $\lambda_f(p^{\nu})=\lambda_f(p)^{\nu}$, and moreover $\lambda_f(p) = O(p^{-\frac12})$ \cite[Theorem~4.6.17]{Miyake}, so the contribution of this term to the prime sum in~\eqref{Eq:explicit formula} is $O_k(N^{-1})$.	
Next, by the Hecke relations, the sum over prime numbers not dividing $N$ in~\eqref{Eq:explicit formula} is equal to
\begin{align*}
&-\frac 2{\Omega_k(N)}\sum_{f\in B^*_k(N)} \omega_f(N) \sum_{\substack{p, \nu \\ p\nmid N}} \frac{ \lambda_f(p^{\nu}) }{p^{\nu/2}} \widehat \phi \Big( \frac{\nu \log p}{\log X} \Big) \frac{\log p}{\log X}  \\
&+\frac 2{\Omega_k(N)}\sum_{f\in B^*_k(N)} \omega_f(N) \sum_{\substack{p,\nu \geq 2 \\ p\nmid N}} \frac{ \lambda_f(p^{\nu-2}) }{p^{\nu/2}} \widehat \phi \Big( \frac{\nu \log p}{\log X} \Big) \frac{\log p}{\log X}.	
\end{align*} 
The contribution of the summands with $\nu=2$ in the second term are given by
\begin{align*}
	&\frac 2{\Omega_k(N)}\sum_{f\in B^*_k(N)} \omega_f(N) \sum_{\substack{p\nmid N}} \frac{ \lambda_f(1)}{p} \widehat \phi \Big( \frac{2 \log p}{\log X} \Big) \frac{\log p}{\log X}  =2\sum_{\substack{p\nmid N}} \frac{ 1}{p} \widehat \phi \Big( \frac{2 \log p}{\log X} \Big) \frac{\log p}{\log X}.  
\end{align*}
	The goal is now to estimate the contribution of the remaining terms, that is the terms with $\nu \geq 3$ and $p\neq N$.
	 From Lemma \ref{lemma:peterssonbound} and the prime number theorem, we see that
	
	\begin{align*}
	\frac 2{\Omega_k(N)}\sum_{f\in B^*_k(N)} \omega_f(N) \sum_{\substack{p \nmid N \\ \nu \geq 3}} \frac{ \lambda_f(p^{\nu-2})}{p^{\nu/2}}& \widehat \phi \Big( \frac{\nu \log p}{\log X} \Big) \frac{\log p}{\log X} \\
	&\ll_{k,\eps} N^{-1+\eps} \sum_{ p \leq X^{\sigma / 3 }} \frac{\log p}{p^{\frac12+2\eps}} \sum_{3 \leq \nu   \leq \sigma \frac{\log X}{\log p}  } p^{(-\frac14+\eps)\nu}  \\
&\ll N^{-1+\eps} \sum_{ p \leq X^{\sigma / 3 }} \frac{\log p}{p^{\frac54-\eps}} \ll N^{-1+\eps},
	\end{align*}
which gives the claimed result.
\end{proof}

We are now ready to apply the Petersson formula.
\begin{corollary}Let $\phi$ be an even Schwartz test function for which $\sigma= \sup({\rm supp} (\widehat \phi)) <2$. Then, for $N$ running through the set of prime numbers, we have that
	\begin{multline}
	\label{equation corollary expression kloosterman}
		\Dstar = \widehat{\phi}(0) + \frac{\phi(0)}{2} \\
	-\frac{4\pi i^k}{\Omega_k(N)}  \sum_{\substack{p \nmid N \\ \nu \geq 1}} \frac{1}{p^{\nu/2}} \widehat \phi \Big( \frac{\nu\log p}{\log X} \Big) \frac{\log p}{\log X} \sum_{c \equiv 0 \bmod N} c^{-1} S(p^{\nu},1;c) J_{k-1}(4\pi \sqrt{p^{\nu}}/c)
	+ O_k\Big(\frac{1}{\log X}\Big).
	\end{multline}
	\label{corollary expression kloosterman}
\end{corollary}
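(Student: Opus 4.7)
The plan is to start from the expression for $\Dstar$ in Lemma~\ref{Lemma:prime powers}, asymptotically evaluate each deterministic term as $N\to\infty$, and then apply the Petersson formula (Lemma~\ref{Petersson N Prime}) to the residual Hecke eigenvalue average, verifying that the resulting error terms are negligible in the relevant range $\nu\log p \leq \sigma\log X$ with $\sigma<2$. Since $X=k^2N$, one has $\log(N/\pi^2)/\log X = 1 + O_k(1/\log X)$, so the first term contributes $\widehat\phi(0) + O_k(1/\log X)$. The gamma factor integral is uniformly bounded in $N$ (with $k$ fixed) by combining Stirling's estimate $\frac{\Gamma'}{\Gamma}(s)\ll \log(2+|s|)$ on the relevant vertical lines with the Schwartz decay of $\phi$, so this term contributes $O_k(1/\log X)$ after the $1/\log X$ prefactor.

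The key step is to evaluate the prime sum
\[
2\sum_{p\nmid N}\frac{1}{p}\widehat\phi\Big(\frac{2\log p}{\log X}\Big)\frac{\log p}{\log X}
\]
by partial summation against Mertens's estimate $\sum_{p\leq Y}\log p/p = \log Y + O(1)$. Setting $F(u) := \widehat\phi(2u/\log X)$, one obtains
\[
\sum_p \frac{\log p}{p}F(\log p) = \int_0^\infty F(u)\,du + O(\|F'\|_1) = \frac{\log X}{2}\int_0^\infty\widehat\phi(v)\,dv + O_\phi(1) = \frac{\phi(0)\log X}{4} + O_\phi(1),
\]
using the substitution $v=2u/\log X$, the evenness of $\widehat\phi$, and Fourier inversion $\phi(0)=\int_{\mathbb R}\widehat\phi(v)\,dv$. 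Multiplying by $2/\log X$ produces exactly $\phi(0)/2 + O(1/\log X)$; the excluded prime $p=N$ contributes at most $O_k(1/(N\log X))$ and is absorbed.

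For the Hecke eigenvalue sum I would write $\lambda_f(p^\nu)=\lambda_f(p^\nu)\lambda_f(1)$ and apply Lemma~\ref{Petersson N Prime} with $m=p^\nu,\, n=1$. Since $\nu\geq 1$ the diagonal $\delta(p^\nu,1)$ vanishes, so the Kloosterman term in the Petersson formula directly produces the desired expression after noting $\Omega_k(N)=1+O_k(N^{-1})$. The Petersson error $O_\eps(N^{-1}(kp^\nu)^\eps)$, weighted and summed over $p\nmid N$ and $\nu\geq 1$ with $\nu\log p\leq \sigma\log X$, is bounded by
\[
N^{-1+\eps}\sum_{\substack{p,\nu\\ p^\nu\leq X^\sigma}}\frac{(p^\nu)^\eps\log p}{p^{\nu/2}\log X}\ll_k \frac{N^{-1+\eps} X^{\sigma/2+\eps}}{\log X}\ll_k N^{-1+\sigma/2+\eps},
\]
which is $O_k(N^{-\delta})$ for some $\delta>0$ under the assumption $\sigma<2$.

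The main obstacle, though essentially routine, is the Mertens-type evaluation of the prime sum: it is exactly this step that produces the $\phi(0)/2$ term matching the $\tfrac12\delta_0$ part of the orthogonal density $W(O)$. Everything else reduces to verifying that the remaining deterministic pieces are $O_k(1/\log X)$ and that the strict inequality $\sigma<2$ leaves just enough room in the $\nu=1$ contribution to absorb the Petersson errors.
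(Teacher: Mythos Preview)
Your proof is correct and follows essentially the same approach as the paper: start from Lemma~\ref{Lemma:prime powers}, apply the Petersson formula (Lemma~\ref{Petersson N Prime}) to the Hecke eigenvalue sum, and show the remaining deterministic terms contribute $\widehat\phi(0)+\tfrac{\phi(0)}2+O_k(1/\log X)$. The only difference is that the paper outsources the latter computation to \cite[Lemmas~2.2 and 4.1]{DFS}, whereas you give the Mertens partial-summation and Stirling arguments directly; the Petersson error bound $N^{\sigma/2-1+\eps}$ you obtain matches the paper's.
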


\begin{proof}
	By Lemma~\ref{Petersson N Prime}, we have
	\begin{multline*}
		-\frac 2{\Omega_k(N)}\sum_{f\in B^*_k(N)} \omega_f(N) \sum_{\substack{p \nmid N \\ \nu \geq 1}}\frac{\lambda_f(p^{\nu})}{p^{\nu/2}} \widehat \phi \Big( \frac{\nu\log p}{\log X} \Big) \frac{\log p}{\log X}
		\\=   -\frac{4\pi i^k}{\Omega_k(N)}  \sum_{\substack{p \nmid N \\ \nu \geq 1}} \frac{1}{p^{\nu/2}} \widehat \phi \Big( \frac{\nu\log p}{\log X} \Big) \frac{\log p}{\log X} \sum_{c \equiv 0 \bmod N} c^{-1} S(p^\nu,1;c) J_{k-1}(4\pi \sqrt{p^\nu}/c) + O_{k,\eps}(N^{\frac{\sigma}2 -1 + \eps}),
	\end{multline*}
which, combined with Lemma~\ref{Lemma:prime powers} and \cite[Lemmas~2.2 and 4.1]{DFS}, gives the desired result.
\end{proof}

\section{The prime sum in terms of zeros of Dirichlet $L$-functions}
\label{Section:Prime sum}

In the previous section we reduced the problem of estimating the one-level density to that of bounding sums over primes containing averages of Kloosterman sums. We will now evaluate these averages using Dirichlet characters, which will allow us to relate our problem to zeros of Dirichlet $L$-functions.

\begin{lemma}
\label{lemma dirichlet}
Let $\phi$ be an even Schwartz test function for which $\sigma= \sup({\rm supp} (\widehat \phi)) <2$. For $N$ running through the set of prime numbers, we have that
	\begin{multline}
	\label{equation lemma dirichlet}
		\Dstar =  \widehat{\phi}(0) + \frac{\phi(0)}{2}  \\
	-\frac{4\pi i^k}{\Omega_k(N)} \frac{1}{\log X}   \sum_{\substack{c \equiv 0 \bmod N \\ c < N^{1+ \frac{1}{2k-3}}}}  \frac{1}{c\varphi(c)}\sum_{\substack{\chi \bmod c\\ \chi \neq \chi_0}}\tau(\bar\chi)^2
 \sum_{\substack{p \nmid c \\ \nu \geq 1}} \frac{\log p}{p^{\nu/2}} \widehat \phi \Big( \frac{\nu\log p}{\log X} \Big) \chi(p^\nu) J_{k-1}(4\pi \sqrt{p^\nu}/c)   + O_k\Big(  \frac{1}{\log X}\Big)
.
	\end{multline}
\end{lemma}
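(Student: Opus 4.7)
My plan is to start from Corollary~\ref{corollary expression kloosterman} and perform three operations on the inner sum over $c$: expand the Kloosterman sum $S(p^\nu, 1; c)$ into Dirichlet characters modulo $c$, isolate the principal character, and truncate at $c = N^{1+1/(2k-3)}$. The governing identity is that whenever $(p, c) = 1$,
$$S(p^\nu, 1; c) = \frac{1}{\varphi(c)} \sum_{\chi \bmod c} \tau(\bar\chi)^2 \chi(p^\nu),$$
which follows from the standard expansion $e(\bar x/c) = \varphi(c)^{-1} \sum_{\chi} \chi(x)\tau(\chi)$ for $(x,c)=1$, plugged into the definition of $S(p^\nu, 1; c)$, together with the evaluation $\sum_{(x,c)=1} \chi(x) e(p^\nu x /c) = \bar\chi(p^\nu)\tau(\chi)$. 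Substituting this into~\eqref{equation corollary expression kloosterman} and rearranging the order of summation produces exactly the main term in~\eqref{equation lemma dirichlet}.

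It remains to show that three error contributions are each $O_k(1/\log X)$. First, the contribution of pairs $(p, c)$ with $p \mid c$ (which must be removed to pass from the condition $p \nmid N$ in~\eqref{equation corollary expression kloosterman} to the condition $p \nmid c$ in~\eqref{equation lemma dirichlet}): since $p \nmid N$ forces $p \mid (c/N)$, one has $p \leq N^{1/(2k-3)}$, and direct use of Weil's bound together with Lemma~\ref{lemma:besselbounds} makes this contribution negligible. Second, the contribution of the principal character $\chi_0 \bmod c$: here $\tau(\bar\chi_0) = \mu(c)$ restricts the sum to squarefree $c$, and the factor $\mu(c)^2/(c\varphi(c)) \ll c^{-2+\eps}$ combined with trivial bounds on the prime sum (using $p^\nu\leq X^\sigma$) is amply sufficient. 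Third, and most delicately, the tail $c > N^{1+1/(2k-3)}$: since $p^\nu \leq X^\sigma$ with $\sigma < 2$, the Bessel argument $4\pi\sqrt{p^\nu}/c$ is small on the whole range, so the bound $J_{k-1}(x) \ll_k x^{k-1}$ from Lemma~\ref{lemma:besselbounds} applies. Combining this with Weil's bound $|S(p^\nu, 1; c)| \ll c^{1/2+\eps}(p^\nu, c)^{1/2}$ and the tail estimate $\sum_{N \mid c,\ c > C} c^{-k+1/2+\eps} \ll N^{-1+\eps} C^{-k+3/2+\eps}$ yields, after inserting $C = N^{1+1/(2k-3)}$ and summing over $p^\nu \leq X^\sigma$ via Chebyshev, a bound of order $N^{(\sigma-2)k/2 + \eps}$, which is $o(1/\log X)$ for any $\sigma < 2$.

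The main obstacle is the calibration of the truncation exponent $1/(2k-3)$: it is precisely the threshold value at which $C^{-k+3/2}$ balances $N^{-k+1}$, so that the tail estimate remains $o(1/\log X)$ in the limit $\sigma \to 2$. Any smaller truncation exponent would fail in that limit, and any larger one would enlarge the range of $c$ that must be handled by the zero-density estimates in Section~\ref{Section:zero density}, degrading the final support constraint $\Theta_k$. Aside from this tight calibration, the Gauss sum identity, the principal-character estimate, and the handling of the $p \mid c$ terms are all routine.
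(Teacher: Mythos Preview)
Your proposal is correct and follows essentially the same route as the paper: the character expansion of $S(p^\nu,1;c)$ via $\sum_{a}\bar\chi(a)S(a,1;c)=\tau(\bar\chi)^2$, the removal of the $p\mid c$ terms and of the principal character, and the truncation at $C=N^{1+1/(2k-3)}$ using $J_{k-1}(x)\ll_k x^{k-1}$ together with Weil's bound. The only cosmetic difference is the order of operations---the paper discards the $p\mid c$ terms over the full $c$-range before truncating, whereas you truncate first and then exploit $p\le c/N<N^{1/(2k-3)}$---but the resulting error terms $N^{\frac{k}{2}(\sigma-2)+\eps}$ and $N^{k(\frac{\sigma}{2}-1)-1+\eps}$ agree.
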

\begin{proof}

For $p\nmid c$, the Kloosterman sums satisfy
\begin{align*}
	S(p^\nu,1;c) = \frac{1}{\varphi(c)}\sum_{a \bmod c}S(a,1;c) \sum_{\chi \bmod c}\bar\chi(a) \chi(p^\nu); \qquad  \sum_{a\bmod c} \overline{\chi}(a)S(a,1;c) = \tau(\overline \chi)^2 .
\end{align*}
We will 
substitute these expressions in Corollary~\ref{corollary expression kloosterman}. Let $C= N^{1+ \frac{1}{2k-3}}$.  Bounding the contribution of the primes dividing $c$ using \cite[(2.13)]{ILS} and Lemma~\ref{lemma:besselbounds}, the sum on the right-hand side of~\eqref{equation corollary expression kloosterman} is equal to
\begin{align*}
&-\frac{4\pi i^k}{\Omega_k(N)} \sum_{c \equiv 0 \bmod N} c^{-1} \sum_{\substack{p \nmid c \\ \nu \geq 1}} \frac{1}{p^{\nu/2}} \widehat \phi \Big( \frac{\nu\log p}{\log X} \Big) \frac{\log p}{\log X}   S(p^\nu,1;c) J_{k-1}(4\pi \sqrt{p^\nu}/c) + O_{k,\eps}\big(N^{(\sigma -2)\frac{k-2}{2}-\frac32 + \eps}\big) \\
&= -\frac{4\pi i^k}{\Omega_k(N)} \sum_{\substack{c \equiv 0 \bmod N \\ c < C}} c^{-1}\sum_{\substack{p \nmid c \\ \nu \geq 1}}\frac{1}{p^{\nu/2}} \widehat \phi \Big( \frac{\nu\log p}{\log X} \Big) \frac{\log p}{\log X}   S(p^\nu,1;c) J_{k-1}(4\pi \sqrt{p^\nu}/c) + O_{k,\eps}\big( N^{-\frac{3}2 +\eps} + N^{\frac{k}{2}(\sigma -2) +\eps}\big) \\
&= -\frac{4\pi i^k}{\Omega_k(N)} \frac{1}{\log X}   \sum_{\substack{c \equiv 0 \bmod N \\ c < C}}  \frac{1}{c\varphi(c)}\sum_{a \bmod c}\sum_{\chi \bmod c}\bar\chi(a)S(a,1;c) 
 \sum_{\substack{p \nmid c \\ \nu \geq 1}} \frac{\log p}{p^{\nu/2}} \widehat \phi \Big( \frac{\nu\log p}{\log X} \Big) \chi(p^{\nu}) J_{k-1}(4\pi \sqrt{p^\nu}/c) \\ & \qquad+ O_{k,\eps}\big(N^{-\frac{3}2 +\eps}  +N^{\frac{k}{2}(\sigma -2) +\eps}  \big) \\
 &= -\frac{4\pi i^k}{\Omega_k(N)} \frac{1}{\log X}   \sum_{\substack{c \equiv 0 \bmod N \\ c < C}}  \frac{1}{c\varphi(c)}\sum_{\substack{\chi \bmod c\\ \chi \neq \chi_0}}\tau(\bar\chi)^2
 \sum_{\substack{p \nmid c \\ \nu \geq 1}} \frac{\log p}{p^{\nu/2}} \widehat \phi \Big( \frac{\nu\log p}{\log X} \Big) \chi(p^\nu) J_{k-1}(4\pi \sqrt{p^\nu}/c) \\ & \qquad+ O_{k,\eps}\big( N^{-\frac{3}2 +\eps} + N^{\frac{k}{2}(\sigma -2) + \eps} +  N^{k(\frac{\sigma}2 -1)-1+\eps}\big),
\end{align*} 
where the error terms account for the contributions of terms with $p\mid c$, terms with $c \geq C$,  and the principal character.
\end{proof}

In the next lemma we express the third term on the right-hand side of~\eqref{equation lemma dirichlet} as a contour integral.
\begin{lemma}
\label{lemma dirichlet 2}
Let $\phi$ be an even Schwartz test function for which $\sigma= \sup({\rm supp} (\widehat \phi)) <2$. For $N$ running through the set of prime numbers, we have that
	\begin{multline}
	\label{equation lemma dirichlet 2}
		\Dstar = \widehat{\phi}(0) + \frac{\phi(0)}{2} 	+ O_k\Big(\frac{1}{\log X}\Big)
	\\  + O_k\bigg(   \sum_{\substack{c \equiv 0 \bmod N \\ c < N^{1+ \frac{1}{2k-3}}}}  \frac{1}{c\varphi(c)} \sum_{1\neq d\mid c} d \underset{\chi \bmod d}{{\sum}^*}   \bigg|
  \int_{(2)}  \frac{L'(s+\frac12,\chi)}{L(s+\frac12,\chi)}\Psi_{\phi,X,c,k}(s) \d s \bigg|    \bigg),
	\end{multline}
where the star over the sum means that we are summing over primitive characters, and where
\begin{equation}
\label{equation definition Psi}
\Psi_{\phi,X,c,k}(s) = \frac{1}{\log X} \int_0^{\infty} x^{s-1}\widehat \phi \Big( \frac{\log x}{\log X} \Big)  J_{k-1}(4\pi \sqrt{x}/c) \d x. 
\end{equation}
\end{lemma}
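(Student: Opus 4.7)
The plan is to start from Lemma \ref{lemma dirichlet}, convert the inner sum over prime powers into a contour integral involving $L'/L(\cdot,\chi)$ via Mellin inversion, replace each $\chi$ by the primitive inducing character $\chi^*$, and regroup the character sum by its conductor.

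\textbf{Mellin inversion.} For fixed $c$ with $N\mid c$ and a non-principal $\chi\bmod c$, denote the inner prime sum by
\[ T_\chi := \sum_{\substack{p\nmid c\\ \nu\geq 1}}\frac{\log p}{p^{\nu/2}}\widehat{\phi}\!\left(\frac{\nu\log p}{\log X}\right)\chi(p^\nu)J_{k-1}(4\pi\sqrt{p^\nu}/c). \]
Because $\chi(p^\nu)=0$ whenever $p\mid c$, the restriction $p\nmid c$ is automatic, and one may rewrite $T_\chi=\sum_{n\geq1}\Lambda(n)\chi(n) n^{-1/2} h_c(n)$ with $h_c(x):=\widehat{\phi}(\log x/\log X)J_{k-1}(4\pi\sqrt{x}/c)$. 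Since $\widehat{\phi}$ is compactly supported and $J_{k-1}(y)\ll\min(y^{k-1},1)$ by Lemma \ref{lemma:besselbounds}, the function $h_c$ is continuous and compactly supported, with $h_c(x)\ll x^{(k-1)/2}$ as $x\to 0^+$. Its Mellin transform equals $(\log X)\Psi_{\phi,X,c,k}(s)$ and is holomorphic for $\mathrm{Re}(s)>-(k-1)/2$. Mellin inversion at $\mathrm{Re}(s)=2$, combined with the absolute convergence of $\sum_n\Lambda(n)\chi(n)n^{-s-1/2}$ on that line, gives
\[ T_\chi = -\frac{\log X}{2\pi i}\int_{(2)}\Psi_{\phi,X,c,k}(s)\,\frac{L'}{L}(s+\tfrac12,\chi)\,ds. \]

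\textbf{Reduction to the primitive character.} Let $d\mid c$ be the conductor of $\chi$, with $d>1$ since $\chi\neq\chi_0$, and let $\chi^*\bmod d$ be the primitive inducing character. On $\mathrm{Re}(s)=2$ the Euler-factor identity
\[ \frac{L'}{L}(s+\tfrac12,\chi)-\frac{L'}{L}(s+\tfrac12,\chi^*)=-\sum_{\substack{p\mid c\\ p\nmid d}}\frac{\chi^*(p)\log p\cdot p^{-(s+1/2)}}{1-\chi^*(p)p^{-(s+1/2)}} \]
produces a correction that is $O(1)$ uniformly in $c$. Rather than try to bound this correction inside the contour integral (which would require control of $\int_{(2)}|\Psi_{\phi,X,c,k}(s)|\,|ds|$), I would reverse the Mellin transform on the correction term to produce a finite prime sum over primes $p\mid c$, $p\nmid d$. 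For such $p$ the argument $y=4\pi\sqrt{p^\nu}/c$ is small (since $p\leq c$ and, for $c\geq N$, $p^{\nu/2}\leq X^{\sigma/2}$ is comparable to $c$), so the Bessel bound $J_{k-1}(y)\ll y^{k-1}$ applies. Combined with $\sum_{\chi\bmod c}|\tau(\bar\chi)|^2=\varphi(c)^2$ and summation over $c<N^{1+1/(2k-3)}$ divisible by $N$, this correction contributes $O_k(1/\log X)$ and is absorbed into the existing error term.

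\textbf{Regrouping by conductor.} After replacing $\chi$ by $\chi^*$ in the integral and substituting into \eqref{equation lemma dirichlet}, I take absolute values. Using the Gauss-sum bound $|\tau(\bar\chi)|^2\leq d$ for any $\chi\bmod c$ of conductor $d$, together with the fact that each primitive character mod $d$ (for $d\mid c$, $d>1$) induces a unique non-principal character mod $c$, the character sum is dominated by
\[ \sum_{\substack{\chi\bmod c\\ \chi\neq\chi_0}}|\tau(\bar\chi)|^2\bigg|\int_{(2)}\Psi_{\phi,X,c,k}(s)\frac{L'}{L}(s+\tfrac12,\chi^*)\,ds\bigg|\leq\sum_{1\neq d\mid c}d\underset{\chi\bmod d}{{\sum}^*}\bigg|\int_{(2)}\Psi_{\phi,X,c,k}(s)\frac{L'}{L}(s+\tfrac12,\chi)\,ds\bigg|. \]
Using $\Omega_k(N)^{-1}=1+O_k(N^{-1})$ (Lemma \ref{lemma:peterssonbound}) to absorb the outer prefactor then yields \eqref{equation lemma dirichlet 2}.

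\textbf{The main obstacle} will be the clean handling of the Euler-factor correction in the second step: a pointwise $O(1)$ bound on $\mathrm{Re}(s)=2$ does not suffice if one stays inside the integral, so one must unwind the Mellin representation and exploit the strong smallness of $J_{k-1}(4\pi\sqrt{p^\nu}/c)$ for primes $p\mid c$. Everything else is bookkeeping: the Mellin transform of $h_c$ is analytic in the required region, the Dirichlet series converges absolutely on $\mathrm{Re}(s)=2$, and the Gauss-sum bound is standard.
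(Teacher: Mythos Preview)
Your proposal is correct and essentially matches the paper's proof. The only difference is the ordering of two interchangeable steps: the paper first replaces $\chi(p^\nu)$ by $\chi^*(p^\nu)$ in the prime sum (which is free since $p\nmid c$), bounds $|\tau(\bar\chi)|^2\le c^*$, regroups by conductor, and \emph{then} applies Mellin inversion with the primitive character—picking up exactly the same finite correction over primes $p\mid c$, $p\nmid d$ (bounded by $O_\eps(N^{\sigma(k/2-1)+\eps}/c^{k-1})$ via $J_{k-1}(y)\ll y^{k-1}$) that you obtain by unwinding the Euler-factor difference after doing Mellin first with the imprimitive $\chi$.
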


\begin{proof}
We will once more use the shorthand $C= N^{1+ \frac{1}{2k-3}}$.
Note that the sum on the right-hand side of~\eqref{equation lemma dirichlet} is equal to
$$ -\frac{4\pi i^k}{\Omega_k(N)} \frac{1}{\log X}   \sum_{\substack{c \equiv 0 \bmod N \\ c < C}}  \frac{1}{c\varphi(c)}\sum_{\substack{\chi \bmod c\\ \chi \neq \chi_0}}\tau(\bar\chi)^2
 \sum_{\substack{p \nmid c \\ \nu \geq 1}} \frac{\log p}{p^{\nu/2}} \widehat \phi \Big( \frac{\nu\log p}{\log X} \Big) \chi^*(p^\nu) J_{k-1}(4\pi \sqrt{p^\nu}/c),$$
 where $\chi^*$ is the primitive character modulo $c^*$ inducing $\chi$. Hence, this sum is 

\begin{align*}
 &\ll_k \frac{1}{\log X}   \sum_{\substack{c \equiv 0 \bmod N \\ c < C}}  \frac{1}{c\varphi(c)}\sum_{\substack{\chi \bmod c\\ \chi \neq \chi_0}}c^* \bigg|
 \sum_{\substack{p \nmid c \\ \nu \geq 1}} \frac{\log p}{p^{\nu/2}} \widehat \phi \Big( \frac{\nu\log p}{\log X} \Big) \chi^*(p^\nu) J_{k-1}(4\pi \sqrt{p^\nu}/c) \bigg| \\
 &= \frac{1}{\log X}   \sum_{\substack{c \equiv 0 \bmod N \\ c < C}}  \frac{1}{c\varphi(c)} \sum_{1\neq d\mid c}d\underset{\chi \bmod d}{{\sum}^*}   \bigg|
 \sum_{\substack{p \nmid c \\ \nu \geq 1}}\frac{\log p}{p^{\nu/2}} \widehat \phi \Big( \frac{\nu\log p}{\log X} \Big) \chi(p^\nu) J_{k-1}(4\pi \sqrt{p^\nu}/c) \bigg|. 
\end{align*}

The next step will be to apply Mellin inversion to the sum inside the absolute values. Doing so and interchanging the sum and integral thanks to absolute convergence (see Lemma~\ref{lemma:bound Psi} below), we obtain the identity
\begin{multline*}
 \frac{1}{\log X}\sum_{\substack{p \nmid c \\ \nu \geq 1}} \frac{\log p}{p^{\nu/2}} \widehat \phi \Big( \frac{\nu\log p}{\log X} \Big) \chi(p^\nu) J_{k-1}(4\pi \sqrt{p^\nu}/c)
 = - \frac{1}{2\pi i} \int_{(2)}  \frac{L'(s+\frac12,\chi)}{L(s+\frac12,\chi)}\Psi_{\phi,X,c,k}(s) \d s\\+O_\eps\Big(\frac{N^{\sigma(\frac k2-1)+\eps}}{c^{k-1}} \Big).
\end{multline*}
  The claimed estimate follows.
\end{proof}

We now establish analytic properties of the function $\Psi_{\phi,X,c,k}(s) $.

\begin{lemma}\label{lemma:bound Psi}
	The function $\Psi_{\phi,X,c,k}(s)$ defined in~\eqref{equation definition Psi} is entire and satisfies the bound
	\begin{align*}
		\Psi_{\phi,X,c,k}(s) \ll_k \frac{X^{\sigma \lvert\Re(s) + \frac{k-1}{2} \rvert}}{(\lvert s \rvert +1)^2 c^{k-1}}.
	\end{align*}
\end{lemma}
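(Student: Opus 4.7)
The plan is to turn the integral defining $\Psi_{\phi,X,c,k}(s)$ into one over a compact interval via a logarithmic substitution, which yields holomorphy and the size bound at once, and then to extract the $(|s|+1)^{-2}$ decay by integrating by parts twice in the new variable, exploiting the smoothness and compact support of $\widehat\phi$. First I would substitute $u = \log x / \log X$ to obtain
\begin{equation*}
\Psi_{\phi,X,c,k}(s) = \int_{-\sigma}^{\sigma} X^{us}\, \widehat\phi(u)\, J_{k-1}\!\left(\frac{4\pi X^{u/2}}{c}\right) du.
\end{equation*}
Since $\widehat\phi$ is Schwartz with support in $[-\sigma, \sigma]$, the integrand is smooth in $u$ on a compact interval and entire in $s$ for each fixed $u$, so differentiation under the integral sign shows $\Psi_{\phi,X,c,k}$ is entire. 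Combining the bound $J_{k-1}(y)\ll_k y^{k-1}$—which follows from Lemma~\ref{lemma:besselbounds} for $y\le 1$ and from $|J_{k-1}(y)|\le 1\le y^{k-1}$ for $y\ge 1$ and $k\ge 2$—with trivial integration then gives $|\Psi_{\phi,X,c,k}(s)|\ll_k X^{\sigma|\Re(s)+(k-1)/2|}/c^{k-1}$, which already proves the claim when $|s|\le 1$.

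For $|s|>1$ I would integrate by parts twice in $u$. Since $\widehat\phi$ and all its derivatives vanish at $\pm\sigma$, the boundary terms are zero and one obtains
\begin{equation*}
\Psi_{\phi,X,c,k}(s) = \frac{1}{(s\log X)^2}\int_{-\sigma}^{\sigma} X^{us}\, \frac{d^2}{du^2}\!\left[\widehat\phi(u)\, J_{k-1}(g(u))\right] du,
\end{equation*}
where $g(u)=4\pi X^{u/2}/c$. The chain rule produces factors of $\log X$ that cancel against the $(\log X)^2$ in the denominator, and the Bessel differential equation $y^2 J_{k-1}''(y)+yJ_{k-1}'(y)+(y^2-(k-1)^2)J_{k-1}(y)=0$ lets one simplify $\frac{d^2}{du^2}J_{k-1}(g(u))=\frac{(\log X)^2}{4}((k-1)^2-g(u)^2)J_{k-1}(g(u))$. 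Expanding via the Leibniz rule, every resulting term has the schematic form $(\log X)^j P(g(u))\,J_{k-1}(g(u))\,X^{us}$ with $j\le 2$ and $\deg P\le 2$.

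The main obstacle is then to show each such term is pointwise dominated by $X^{u(\Re(s)+(k-1)/2)}/c^{k-1}$. Contributions with $\deg P\le 1$ satisfy $|P(g)J_{k-1}(g)|\ll_k g^{k-1}$ and are handled immediately by substituting $g=4\pi X^{u/2}/c$. The delicate contribution is the term involving $g^2 J_{k-1}(g)$: for $g\le 1$ one bounds $g^2 J_{k-1}(g)\ll_k g^{k+1}\le g^{k-1}$ and there is nothing to do, while for $g\ge 1$ one invokes the asymptotic decay $|J_{k-1}(y)|\ll y^{-1/2}$ and, if necessary, an additional integration by parts exploiting the rapid oscillation of $J_{k-1}(g(u))$ in the range where $g$ is large, using that $g'(u)/g(u)=(\log X)/2$. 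Integrating these pointwise bounds over $u\in[-\sigma,\sigma]$ and reinstating the prefactor $(s\log X)^{-2}$ yields the claimed estimate.
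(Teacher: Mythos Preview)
Your approach is essentially the paper's: the same substitution $u=\log x/\log X$, the same direct estimate for $|s|\le 1$, and two integrations by parts in $u$ for $|s|>1$. The only genuine difference is in how the Bessel derivatives are handled. The paper does not invoke the differential equation; instead it applies the recurrence $2J'_m=J_{m-1}-J_{m+1}$ (together with $J_{-1}=-J_1$ when $k=2$) to rewrite $J'_{k-1}$ and $J''_{k-1}$ as linear combinations of $J_{k-3},J_{k-2},\ldots,J_{k+1}$, and then bounds each of these by the first estimate in Lemma~\ref{lemma:besselbounds}. After multiplication by the factors $g$ and $g^2$ coming from $g',g'',(g')^2$ (with $g=4\pi X^{u/2}/c$), every resulting term is $\ll_k g^{k-1}=(X^{u/2}/c)^{k-1}$, and the stated bound follows in one line without any further device.

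Two points on your write-up. First, the Leibniz cross term
\[
2\,\widehat\phi'(u)\,\frac{d}{du}J_{k-1}(g(u))=\widehat\phi'(u)(\log X)\,g\,J'_{k-1}(g)
\]
is \emph{not} of the schematic form $P(g)J_{k-1}(g)$ that you assert; it genuinely involves $J'_{k-1}$ and must be handled separately (the recurrence does this in one step, giving $gJ_{k-2}(g)$ and $gJ_k(g)$, both $\ll_k g^{k-1}$). Second, your final paragraph is both vague and unnecessary: the paper never appeals to the oscillatory decay $J_{k-1}(y)\ll y^{-1/2}$ or to any ``additional integration by parts exploiting rapid oscillation''. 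As written, that last step is not a proof, and the cleaner route is simply to use the recurrence and the power bound $J_m(y)\ll_k y^{m}$ from Lemma~\ref{lemma:besselbounds} throughout.
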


\begin{proof}
The fact that $\widehat \phi$ has compact support immediately implies that $\Psi_{\phi,X,c,k}(s)$ is entire.
	Assume now  that $\lvert s \rvert >1$.
	We change variables $u= \frac{\log x}{\log X}$ in the definition of $\Psi_{\phi,X,c,k}(s)$ and then integrate by parts twice:
	\begin{align*}
		\Psi_{\phi,X,c,k}(s) &= \int_{\mathbb{R}} X^{us} \widehat\phi(u) J_{k-1}(4 \pi X^{\frac{u}{2}}/c) \d u \\
		&= -\int_{\mathbb{R}} \frac{X^{us}}{s \log X} \Big( \widehat\phi'(u) J_{k-1}(4 \pi X^{\frac{u}{2}}/c) + \widehat\phi(u) \frac{2 \pi}{c} X^{\frac{u}{2}}\log X J'_{k-1}(4 \pi X^{\frac{u}{2}}/c)\Big) \d u \\
		&= \int_{\mathbb{R}} \frac{X^{us}}{(s \log X)^2} \Big( \widehat\phi''(u) J_{k-1}(4 \pi X^{\frac{u}{2}}/c)   + \widehat\phi'(u) \frac{4 \pi}{c} X^{\frac{u}{2}}\log X J'_{k-1}(4 \pi X^{\frac{u}{2}}/c) \\& 
		\hspace{1cm} +   \widehat\phi(u) \frac{\pi}{c} X^{\frac{u}{2}}(\log X)^2 J'_{k-1}(4 \pi X^{\frac{u}{2}}/c) + \widehat\phi(u) \frac{4 \pi^2}{c^2} X^{u}(\log X)^2 J''_{k-1}(4 \pi X^{\frac{u}{2}}/c)\Big) \d u.
	\end{align*}
		By the identity
$$2J'_k(x) =  J_{k-1}(x)-J_{k+1}(x)  $$ 
(see \cite[equation~(2) page~17]{Wa}) and Lemma~\ref{lemma:besselbounds}, we deduce the bound\footnote{Note that in the case $k=2$, we also use the identity $J_{-1}(x)=-J_1(x)$.}
\begin{align*}
	\Psi_{\phi,X,c,k}(s) &\ll_k \int_{-\sigma}^{\sigma} \frac{X^{u \Re(s)}}{\lvert s\rvert^2}  \Big( \frac{X^{\frac{u}{2}}}{c}\Big)^{k-1}  \d u 
	\ll \frac{X^{\sigma \lvert\Re(s) + \frac{k-1}{2} \rvert}}{\lvert s \rvert^2 c^{k-1}} .
\end{align*}
Finally, for $\lvert s \rvert\leq1$ the bound $\Psi_{\phi,X,c,k}(s)  \ll \frac{X^{\sigma \lvert\Re(s) + \frac{k-1}{2} \rvert}}{c^{k-1}}$ follows directly from the definition.
\end{proof}

The next step will be to move the contour of integration to the left in the integral appearing in~\eqref{equation lemma dirichlet 2}. 

\begin{proposition}\label{corollary:sum over zeros}
	Let $\phi$ be an even Schwartz test function for which $\sigma= \sup({\rm supp} (\widehat \phi)) <2$.
	For $k\geq 2$ a fixed even integer, and for $N$ running through the set of prime numbers,
we have 
\begin{multline}
	\Dstar = \widehat{\phi}(0) + \frac{\phi(0)}{2}  	+ O_k\Big(\frac{1}{\log X}\Big)
	 \\ + O_{k,\eps}\bigg( N^{\sigma(\frac{k}{2} -1) + \eps} \sup_{N \leq D <  N^{1+ \frac{1}{2k-3}}} \sup_{\frac12\leq\beta<1} \sup_{1\leq T \leq N^5 }  \frac{N^{\sigma\beta}}{D^{k} T^2} \sum_{ \substack{d \sim D \\ d\equiv 0 \bmod N }} \underset{\chi \bmod d}{{\sum}^*} 
	\sum_{\substack{\rho_\chi \\ \beta\leq\Re(\rho_\chi) < \beta+ \frac{1}{\log N} \\ T-1\leq \lvert\Im(\rho_\chi)\rvert<2T } } 1\bigg)
,
\end{multline}
where $\rho_{\chi}$ is running over the non-trivial zeros of  the Dirichlet $L$-function~$L(s,\chi)$ and $d\sim D$ means $\frac D2<d \leq D$.
\end{proposition}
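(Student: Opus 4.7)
The plan is to shift contours in the integrals appearing in Lemma~\ref{lemma dirichlet 2} to produce a sum over non-trivial zeros of $L(\cdot, \chi)$. For each primitive character $\chi$ of conductor $d > 1$, I would first truncate
\[
I(\chi, c) := \int_{(2)} \frac{L'}{L}(s+\tfrac12, \chi) \, \Psi_{\phi, X, c, k}(s) \, ds
\]
at height $T_0 = N^5$; by Lemma~\ref{lemma:bound Psi} and $|(L'/L)(2+it, \chi)| = O(1)$, the tail is $O(N^{-A})$ for any $A > 0$, absorbable into $O_k(1/\log X)$. Then I would shift the truncated contour from $\Re(s) = 2$ to $\Re(s) = -\tfrac12 - \tfrac{1}{\log N}$, lying between the non-trivial zeros of $L(s+\tfrac12, \chi)$ (which sit in the strip $-\tfrac12 \leq \Re(s) \leq \tfrac12$) and the first trivial zero. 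The residue theorem yields
\[
I(\chi, c) = 2\pi i \sum_{\substack{\rho_\chi \\ |\Im(\rho_\chi)| \leq T_0}} m(\rho_\chi) \, \Psi_{\phi, X, c, k}(\rho_\chi - \tfrac12) \; + \; \mathcal{E}(\chi, c),
\]
where $\mathcal{E}(\chi,c)$ collects the shifted-contour integral and the two horizontal pieces.

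The error $\mathcal{E}(\chi, c)$ is controlled using the functional equation for $L(\cdot, \chi)$: on the new contour one has $|(L'/L)(s+\tfrac12, \chi)| \ll \log(d(|t|+1))$ after reflecting to $\Re(\cdot) = 1 + 1/\log N$, giving $|\mathcal{E}(\chi,c)| \ll_k X^{\sigma(k-2)/2}\log(dT_0)/c^{k-1}$; summed against the weights $d/(c\varphi(c))$ this error is $O_{k,\eps}(N^{-(k-2)(1-\sigma/2)+\eps}) = o(1)$ for $\sigma < 2$. Each residue is bounded via Lemma~\ref{lemma:bound Psi} by
\[
|\Psi_{\phi, X, c, k}(\rho_\chi - \tfrac12)| \ll_k \frac{X^{\sigma(\Re(\rho_\chi) + (k-2)/2)}}{(|\gamma_\chi| + 1)^2 \, c^{k-1}}.
\]
I would then dyadically partition the zeros by $\Re(\rho_\chi) \in [\beta, \beta + 1/\log N)$ and $|\Im(\rho_\chi)| \in [T-1, 2T)$, producing $O((\log N)^2)$ boxes. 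Zeros with $\beta < \tfrac12$ are reduced to $\beta \geq \tfrac12$ via the functional equation $\chi \leftrightarrow \bar\chi$ (which sends $\rho$ to $1-\bar\rho$): since $N^{\sigma\Re(\rho_\chi)}$ is monotone in $\Re(\rho_\chi)$ and the sum over primitive characters modulo $d$ is invariant under conjugation, the $\beta < \tfrac12$ contribution is dominated by the $\beta \geq \tfrac12$ contribution.

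Finally, I would swap the outer sums over $c$ and $d$. For $d \equiv 0 \bmod N$, the condition $d \mid c$ forces $N \mid c$, so $c = dm$ with $m \leq C/d$ where $C = N^{1+1/(2k-3)}$, and $\sum_{m \geq 1} d/((dm)\varphi(dm)(dm)^{k-1}) \ll_\eps d^{-k+\eps}$. For $d$ coprime to $N$, $c \geq dN$ supplies an extra factor $N^{-(k+1)}$, which combined with the Riemann--von Mangoldt count of zeros yields a contribution of size $O_{k,\eps}(N^{\sigma k/2 - k - 1 + \eps}) = O_{k,\eps}(N^{-1+\eps})$ for $\sigma < 2$, absorbable into $O_k(1/\log X)$. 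A further dyadic partition $d \sim D$ with $N \leq D < N^{1+1/(2k-3)}$ produces $O(\log N)$ values of $D$, and replacing the triple dyadic sum by its supremum (times $(\log N)^3 = O(N^\eps)$) yields the claimed bound. The main obstacle is Step 2, where the functional equation must be invoked to bound $L'/L$ on a contour to the left of the critical strip; a secondary technical point is checking that the divisors $d$ coprime to $N$ give a negligible contribution, which hinges crucially on $\sigma < 2$.
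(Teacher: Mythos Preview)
Your strategy is the same as the paper's: shift the contour in the integral from Lemma~\ref{lemma dirichlet 2}, pick up the non-trivial zeros, bound the shifted integral, then swap the $c$- and $d$-sums and dyadically decompose in $d$, $\beta$, and $T$. The handling of the symmetry $\beta\mapsto 1-\beta$, the $(d,N)=1$ contribution, and the dyadic bookkeeping all match the paper.

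There are, however, two concrete problems with your contour choice $\Re(s)=-\tfrac12-\tfrac1{\log N}$. First, for \emph{even} primitive characters the Dirichlet $L$-function has a trivial zero at $s+\tfrac12=0$, i.e.\ at $s=-\tfrac12$; your contour therefore does \emph{not} lie ``between the non-trivial zeros and the first trivial zero'' in this case, and you must pick up the additional residue $\Psi_{\phi,X,c,k}(-\tfrac12)\mathbf 1_{\chi(-1)=1}$ (the paper does exactly this). Second, and more seriously, your claimed bound $O_{k,\eps}(N^{-(k-2)(1-\sigma/2)+\eps})$ for the shifted-contour contribution is vacuous when $k=2$: the exponent is $\eps$, and $N^{\eps}$ is not $o(1)$. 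The reason is that on your line $|\Re(s)+\tfrac{k-1}{2}|$ equals $\tfrac{k-2}{2}+O(1/\log N)$, which vanishes at $k=2$, so Lemma~\ref{lemma:bound Psi} gives no decay in $X$.

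Both issues disappear if you shift instead to $\Re(s)=-1$ as the paper does. On that line $|\Re(s)+\tfrac{k-1}{2}|=|\tfrac{k-3}{2}|$, and the standard bound $\tfrac{L'}{L}(s+\tfrac12,\chi)\ll \log(d|s|)$ (valid well away from trivial zeros, cf.\ \cite[\S19]{D}) gives a total contribution $\ll_{k,\eps} N^{\sigma|k-3|/2-k+1+\eps}$, which is $o(1)$ for every even $k\ge 2$ and $\sigma<2$; the extra residue at $s=-\tfrac12$ contributes $O_{k,\eps}(N^{\sigma(k/2-1)-k+1+\eps})$, also admissible.
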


\begin{proof}

For an integer $d>1$, we let $\chi \bmod d$ be a primitive character.
We will pull the contour of integration to the left in the integral
$$\frac{1}{2\pi i} \int_{(2)}  \frac{L'(s+\frac12,\chi)}{L(s+\frac12,\chi)}\Psi_{\phi,X,c,k}(s) \d s, $$
which appears in Lemma~\ref{lemma dirichlet 2}. Moreover, following~\cite[\S 19]{D} and applying Lemma~\ref{lemma:bound Psi}, this integral is easily shown to be equal to 
$$ \sum_{\rho_\chi} \Psi_{\phi,X,c,k}\big(\rho_\chi - \tfrac12\big) + \Psi_{\phi,X,c,k}\big(-\tfrac12\big)1_{\chi(-1)=1} + \frac{1}{2\pi i} \int_{(-1)}  \frac{L'(s+\frac12,\chi)}{L(s+\frac12,\chi)}\Psi_{\phi,X,c,k}(s) \d s.$$
Arguing as in~\cite[\S 19]{D}, we can show that on the line $\Re(s)=-1$ we have the bound
$$\frac{L'(s+\frac12,\chi)}{L(s+\frac12,\chi)} \ll \log (d\lvert s\rvert).$$ 
Combining this with Lemma~\ref{lemma:bound Psi}, we  deduce that
\begin{align*}
	\frac{1}{2\pi i} \int_{(-1)}  \frac{L'(s+\frac12,\chi)}{L(s+\frac12,\chi)}\Psi_{\phi,X,c,k}(s) \d s
	&\ll_k \int_{(-1)} \log (d\lvert s\rvert) \frac{X^{\sigma \lvert \frac{k-3}{2}\rvert}}{\lvert s\rvert^2 c^{k-1}} \d s
	\ll \frac{X^{\sigma \lvert \frac{k-3}{2}\rvert}}{ c^{k-1}} \log d.
\end{align*}
The total contribution of this term to the second error term in~\eqref{equation lemma dirichlet 2} is $\ll_{k,\eps} N^{\sigma \lvert \frac{k-3}{2}\rvert - k +1 +\eps}$ (recall that $X=k^2N$), which is admissible for $\sigma < 2$ and any even $k\geq 2$. 
Using Lemma~\ref{lemma:bound Psi}, we see that the contribution of the terms $\Psi_{\phi,X,c,k}\big(-\tfrac12\big)$ is $O_{k,\eps}(N^{\sigma(\frac{k}2 -1) - k +1 +\eps})$ which is also  admissible.
We are left with the term involving non-trivial zeros of $L(s,\chi)$ whose contribution to the second error term in~\eqref{equation lemma dirichlet 2} is (we set $ C=N^{1+ \frac{1}{2k-3}}$)
\begin{align*}
	&  \ll_k \sum_{\substack{c \equiv 0 \bmod N \\ c < C}}  \frac{1}{c\varphi(c)} \sum_{1\neq d\mid c}d\underset{\chi \bmod d}{{\sum}^*} 
		 \sum_{\rho_\chi} |\Psi_{\phi,X,c,k}\big(\rho_\chi - \tfrac12\big)| \\ 	&\ll_k
	 N^{\sigma(\frac{k}{2} -1)}\sum_{\substack{c \equiv 0 \bmod N \\ c < C}}  \frac{1}{c^{k}\varphi(c) }\sum_{1\neq d\mid c}d \underset{\chi \bmod d}{{\sum}^*} 
	\sum_{\rho_\chi} \frac{N^{\sigma\Re(\rho_\chi)}}{(\lvert \rho_\chi  \rvert +1)^2},
\end{align*}	
by Lemma~\ref{lemma:bound Psi}. This expression is
\begin{align*}
	&\ll_\eps N^{\sigma(\frac{k}{2} -1) -k -1+\eps}\sum_{ c' < C/N}  \frac{1}{c'^{k+1} }\sum_{\substack{d\mid c'N \\ d\neq 1}} d \underset{\chi \bmod d}{{\sum}^*} 
	\sum_{\rho_\chi} \frac{N^{\sigma\Re(\rho_\chi)}}{(\lvert \rho_\chi  \rvert +1)^2} \\
	&\ll N^{\sigma(\frac{k}{2} -1) -k -1+\eps}\sum_{\substack{ 1<d< C}} d \sum_{ \substack{c' < C/N\\ c' \equiv 0 \bmod \frac{d}{(d,N)}}}  \frac{1}{c'^{k+1} }\underset{\chi \bmod d}{{\sum}^*} 
	\sum_{\rho_\chi} \frac{N^{\sigma\Re(\rho_\chi)}}{(\lvert \rho_\chi  \rvert +1)^2} \\
	&\ll N^{\sigma(\frac{k}{2} -1) -k -1+\eps}\sum_{\substack{ 1<d< C}}d \Big(\frac{d}{(d,N)}\Big)^{-k-1}\underset{\chi \bmod d}{{\sum}^*} 
	\sum_{\rho_\chi} \frac{N^{\sigma\Re(\rho_\chi)}}{(\lvert \rho_\chi  \rvert +1)^2} \\
	&\ll N^{\sigma(\frac{k}{2} -1) +\eps}\sum_{\substack{ d< C \\ d \equiv 0\bmod N}} d^{-k}\underset{\chi \bmod d}{{\sum}^*} 
	\sum_{\rho_\chi} \frac{N^{\sigma\Re(\rho_\chi)}}{(\lvert \rho_\chi  \rvert +1)^2} + O_{\eps}\big(N^{\sigma\frac k2-k-1 +\eps}\big).
\end{align*}

The sum over $\rho_\chi$ can be truncated at $|\Im(\rho_\chi)| \leq N^{5}$ modulo an admissible error term. The final step is to decompose the sum over zeros into subsums in which $\Re(\rho_\chi)$ lies in a short vertical band of size $\frac1{\log N}$ (on which the function $N^{\sigma\Re(\rho_\chi)}$ is essentially constant), and in which $\Im(\rho_\chi)$ lies in a dyadic interval. Note that by monotonicity and by symmetry of the zeros about the line $\Re(s)=\frac 12$, we may restrict the sum over zeros to the $\rho_\chi$ for which $\Re(\rho_\chi) \geq \frac 12$. Finally, we cut the sum over $d$ dyadically. Once this is done, we obtain the claimed estimate.
\end{proof}

\section{Proof of Theorem~\ref{theorem main}}\label{Section:zero density}

The object of this section is to state and apply a zero-density estimate on Dirichlet $L$-functions, which will be the last step in the proof of Theorem~\ref{theorem main}. For a Dirichlet character $\chi$, we denote $$N(\beta,T,\chi):= \#\lbrace \rho_\chi : \Re(\rho_\chi) \geq  \beta, \lvert\Im(\rho_\chi)\rvert <T , L(\rho_\chi,\chi) =0\rbrace.$$

 \begin{theorem}
 	\label{theorem zero density IK}
 	Fix $\eps >0$. In the range $\frac 12+\eps\leq \beta\leq 1$ and for $h \in \mathbb N$, $Q\geq 1$, we have the bound
 	$$ \sum_{\substack{q\leq Q \\ (q,h)=1}}  \underset{\psi \bmod q}{{\sum}^*}
 	\hspace{.2cm}  \sum_{\xi \bmod h} N(\beta,T,\psi\xi) \ll_\eps \big((hQT)^{(2+\eps)(1-\beta)}+(hQ^2T)^{(1-\beta) \min(\frac 3{2-\beta},\frac 3{3\beta-1})}\big)(\log hQT)^{O_{\eps}(1)}. $$

 \end{theorem}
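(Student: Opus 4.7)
My plan is to derive Theorem~\ref{theorem zero density IK} by reducing the weighted character sum to sums over primitive Dirichlet characters and then applying two complementary zero-density estimates through a hybrid large sieve. The first step is a character decomposition: each $\xi \bmod h$ is induced uniquely by a primitive character $\xi^*$ of conductor $d \mid h$, and the non-trivial zeros of $L(s, \psi\xi)$ coincide with those of $L(s, \psi\xi^*)$. Since $(q,h)=1$ and $d \mid h$, the product $\psi\xi^*$ is itself primitive of modulus $qd$, so the left-hand side equals
$$\sum_{d \mid h} \underset{\xi^* \bmod d}{{\sum}^*} \sum_{\substack{q \leq Q \\ (q,h) = 1}} \underset{\psi \bmod q}{{\sum}^*} N(\beta, T, \psi\xi^*).$$

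The analytic workhorse is a hybrid large sieve inequality of the shape
$$\sum_{d \mid h} \underset{\xi^* \bmod d}{{\sum}^*} \sum_{q \leq Q} \underset{\psi \bmod q}{{\sum}^*} \Big|\sum_{n \leq N} a_n (\psi\xi^*)(n)\Big|^2 \ll h(Q^2 + N) \sum_n |a_n|^2,$$
obtained by fixing $\xi^*$, absorbing it into twisted coefficients $a_n \xi^*(n)$, applying Montgomery's classical large sieve in $\psi$, and summing trivially over primitive $\xi^*$ mod $d \mid h$ via $\sum_{d \mid h} \varphi^*(d) = \varphi(h) \leq h$. The linear dependence on $h$ is crucial: a naive application of the large sieve directly to the combined character $\psi\xi^*$ (viewed as primitive modulo $qd \leq hQ$) would only yield $(hQ)^2 + N$, resulting in an unwanted extra power of $h$ in the final estimate.

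With this hybrid inequality in hand, I would execute the two standard zero-density approaches in parallel. For the first term $(hQT)^{(2+\eps)(1-\beta)}$, I would apply a Selberg-type mollifier to convert the existence of a zero $\rho_\chi$ with $\Re(\rho_\chi) \geq \beta$ into the largeness of a Dirichlet polynomial $K_\chi(\rho_\chi)$; Gallagher's lemma followed by the hybrid large sieve above then yields the claim, paralleling the Huxley-type argument reproduced in~\cite[Ch.~10]{IK}. For the second term $(hQ^2T)^{(1-\beta)\min(3/(2-\beta),\, 3/(3\beta-1))}$, I would instead follow Montgomery's density theorem; the two branches of the minimum reflect the familiar trade-off between the reflection principle (favorable for $\tfrac12 \leq \beta \leq \tfrac34$, yielding $3/(2-\beta)$) and a direct higher-moment estimate of a mollified Dirichlet polynomial (favorable for $\tfrac34 \leq \beta \leq 1$, yielding $3/(3\beta-1)$); the two branches agree at $\beta = \tfrac34$.

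I expect the principal obstacle to be preserving the sharp linear dependence on $h$ throughout the mollifier argument. Every moment estimate or large-sieve bound must be fed through the hybrid inequality rather than its classical one-parameter form, since the latter would inflate each occurrence of the conductor $qd$ into an effective modulus $\leq hQ$ and thereby double the $h$-exponent. Handling the optimization between the two branches of the minimum, and choosing the mollifier length $y$ and Dirichlet polynomial length $N$ so that the exponents align with the claimed bound, comprises the main technical content beyond this reduction.
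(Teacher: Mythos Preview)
Your proposal sketches an ab initio proof of the zero-density estimate, whereas the paper's own proof is a one-line citation: the stated bound is precisely \cite[Theorem~10.4]{IK}, and the only additional observation the authors make is that, once $\beta$ is restricted to $[\tfrac12+\eps,1]$, the implied constant in $\ll_\eps$ and the exponent of the logarithm depend on $\eps$ alone rather than on $\beta$. Your outline is a faithful summary of the machinery behind that theorem in \cite{IK} --- the reduction to primitive characters via $\xi\mapsto\xi^*$, the hybrid large sieve with linear dependence on $h$, Selberg's mollifier for the first term, and Montgomery's reflection/mean-value dichotomy for the two branches of the minimum --- so it is not wrong, merely far more detailed than what the paper requires. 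What your proposal does \emph{not} address is the one point the paper actually singles out, namely the uniformity of the implied constants in $\beta$; if you were to flesh out your sketch, you would need to verify at each step that the various optimizations (mollifier length, Dirichlet polynomial length, splitting point between the two regimes) can be carried out with constants depending only on the lower bound $\tfrac12+\eps$ and not on $\beta$ itself.
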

 
 \begin{proof}
 This is a slightly modified version of~\cite[Theorem 10.4]{IK}. Going through the proof of this result, one can see that in the range $\frac 12+\eps\leq \beta\leq 1$, the implied constant in $\ll_\eps$ and that in the power of $(\log hQT)$ depend only on $\eps$ (rather than on $\beta$ itself).
 \end{proof}

We are now ready to prove our main theorem.

\begin{proof}[Proof of Theorem~\ref{theorem main}]

From Proposition~\ref{corollary:sum over zeros}, we see that it is sufficient to show that (we set $C=N^{1+ \frac{1}{2k-3}}$)
\begin{equation}
  N^{\sigma(\frac{k}{2} -1) + \eps} \sup_{N \leq D < C} \sup_{\frac12\leq\beta<1} \sup_{1\leq T \leq N^5 } \frac{N^{\sigma\beta}}{D^{k} T^2} \sum_{ \substack{d \sim D \\ d\equiv 0 \bmod N }}\underset{\chi \bmod d}{{\sum}^*} 
	\sum_{\substack{\rho_\chi \\ \beta\leq\Re(\rho_\chi) < \beta+ \frac{1}{\log N} \\ T-1\leq \lvert\Im(\rho_\chi)\rvert<2T } } 1=o_{N\rightarrow \infty}(1).
\label{equation sumtobound proof 1.1}
\end{equation}
	Note that since $D<N^2$, $N^2$ does not divide $d$ and thus we may write $d=fN$ with $(f,N)=1$.
Now, by the Chinese Remainder Theorem, every primitive character $\chi$  modulo $d$ where $d$ is a multiple of $N$ can be decomposed as $ \chi=\psi\xi$ where $\psi$ is primitive modulo $f$ and $\xi$ is primitive modulo $N$. As a result, the left-hand side of~\eqref{equation sumtobound proof 1.1} is 
$$ \ll_\eps  N^{\sigma(\frac{k}{2} -1) + \eps} \sup_{N \leq D < C} \sup_{\frac12\leq\beta<1} \sup_{1\leq T \leq N^5 } \frac{N^{\sigma\beta}}{D^{k} T^2} \sum_{ \substack{f \sim D/N \\ (f,N)=1 }} \underset{\psi \bmod f} {{\sum}^*}
\hspace{.2cm} \underset{\xi \bmod N}{{\sum}^*}
	N(\beta,2T,\psi\xi). $$

We first apply the Riemann--von Mangoldt theorem in the range $\frac 12 \leq \beta <\frac 12+\eps $ and deduce that 	
	$$  N^{\sigma(\frac{k}{2} -1) + \eps} \sup_{N \leq D < C} \sup_{\frac12\leq\beta < \frac 12+\eps } \sup_{1\leq T \leq N^5 } \frac{N^{\sigma\beta}}{D^{k} T^2} \sum_{ \substack{f \sim D/N \\ (f,N)=1 }} \underset{\psi \bmod f} {{\sum}^*}
\hspace{.2cm} \underset{\xi \bmod N}{{\sum}^*}
	N(\beta,2T,\psi\xi)\ll N^{\sigma( \frac k2-\frac 12 + \eps) +1-k +2\eps}, $$
	which is admissible for $\sigma < 2-9\eps$. Moreover, Theorem~\ref{theorem zero density IK} implies that 
\begin{multline}
 N^{\sigma(\frac{k}{2} -1) + \eps} \sup_{N \leq D < C} \sup_{\frac 12+\eps \leq\beta <1  }  \sup_{1\leq T \leq N^5 } \frac{N^{\sigma\beta}}{D^{k} T^2} \sum_{ \substack{f \sim D/N \\ (f,N)=1 }} \underset{\psi \bmod f} {{\sum}^*}
\hspace{.2cm} \underset{\xi \bmod N}{{\sum}^*}
	N(\beta,2T,\psi\xi) \\
 \ll_\eps  N^{\sigma(\frac{k}{2} -1) + 3\eps} \sup_{N \leq D < C} \sup_{\frac 12+\eps \leq\beta <1  }  \sup_{1\leq T \leq N^5 } \frac{N^{\sigma\beta}}{D^{k} T^{2}} \Big( (DT)^{2(1-\beta)}+(N^{-1}D^2T)^{(1-\beta) \min(\frac 3{2-\beta},\frac 3{3\beta-1})}\Big).
 \label{equation after zero density} 
\end{multline}

As $k\geq 2$, the supremum over $D$ is attained at $D=N$ and the supremum over $T$ is attained at $T=1$, thus, the error term above is
\begin{align*}
	&\ll N^{\sigma(\frac{k}{2} -1) + 3\eps} \sup_{\frac 12+\eps \leq\beta <1  }  N^{\sigma\beta-k} \Big(N^{2(1-\beta)} +N^{(1-\beta) \min(\frac 3{2-\beta},\frac 3{3\beta-1})} \Big) 
	\\
	&\leq  
	\sup_{\frac 12 \leq\beta <1  }    \Big( N^{\sigma(\beta +\frac k2-1)-k + 2 (1-\beta)+3\eps} +N^{\sigma(\beta+\frac k2-1)-k+(1-\beta)\min(\frac 3{2-\beta},\frac 3{3\beta-1})+3\eps} \Big) .
\end{align*}
The first error term is admissible whenever
$$ \sigma< \inf_{\frac12 \leq \beta <1} \frac{2\beta +k-2}{ \beta+\frac k2 -1} = 2. $$
As for the second, it is admissible whenever
$$ \sigma< \inf_{\frac12 \leq \beta <1} \frac{k-(1-\beta)\min(\frac{3}{2-\beta},\frac 3{3\beta-1})}{ \beta+\frac k2 -1} =\Theta_k,$$
where we recall that $\Theta_k$ is defined in~\eqref{equation definition thetak}. The last equality follows from optimizing the two cases in the minimum separately. Note that for $k=2$, the infimum is attained at $\beta=\sqrt 3-1$, and for $k\geq 4$ it is attained at $\beta=\frac 34$.
\end{proof}

\begin{remark}
\label{remark grand}
As mentioned in the introduction, the Grand Density Conjecture 
$$ \sum_{\substack{q\leq Q \\ (q,k)=1}}\underset{\psi \bmod q}{{\sum}^*} \hspace{.2cm} \sum_{\xi \bmod k} N(\beta,T,\psi\xi) \ll (kQ^2T)^{2(1-\beta)}(\log kQT)^{O(1)} $$
(see~\cite[p.250]{IK}) implies the Katz--Sarnak conjecture in the full range ${\rm supp} (\widehat \phi) \subset (-2,2)$. Indeed, under this conjecture we can replace~\eqref{equation after zero density} by the expression
$$N^{\sigma(\frac{k}{2} -1) +2 \eps} \sup_{N \leq D < N^{1+ \frac{1}{2k-3}}} \sup_{\frac12\leq\beta<1} \sup_{1\leq T \leq N^5 } \frac{N^{\sigma\beta}}{D^{k} T^{2}} ( N^{-1}D^2T)^{2(1-\beta)} \ll\sup_{\frac12\leq\beta<1}  N^{(\sigma-2) (\beta+\frac k2-1) +2\eps},$$
which is clearly admissible when $\sigma <2$.
\end{remark}

\end{document}